\newtheorem{theorem}{\quad Theorem}
\newtheorem{definition}{\quad Definition} 
\newtheorem{corollary}{\quad Corollary} 
\newtheorem{lemma}{\quad Lemma}
\newtheorem{remark}{\quad Remark}
\newtheorem{proposition}{\quad Proposition}
\title{$H$-kernels in $H$-colored digraphs without $(\xi_{1}, \xi, \xi_{2})$-$H$-subdivisions of  $\overrightarrow{C_{3}}$}
\author{Felipe Hern\'andez-Lorenzana and Roc\'io S\'anchez-L\'opez\\
  \small Facultad de Ciencias, Universidad Nacional Aut\'onoma de M\'exico,\\
  \small C\'ircuito Exterior s/n, Coyoac\'an, Ciudad Universitaria, 04510,\\
  \small Ciudad de M\'exico, CDMX
}
\date{}
\begin{document}
\maketitle

\abstract{Let $H$ be a digraph possibly with loops  and $D$ a digraph without loops with a coloring of its arcs $c:A(D) \rightarrow V(H)$ ($D$ is said to be an $H$-colored digraph).  A directed path $W$ in $D$ is said to be an $H$-path if and only if the consecutive colors encountered on $W$ form a directed walk in $H$. A subset $N$ of vertices of $D$ is said to be an $H$-kernel  if (1) for every pair of different vertices in $N$ there is no $H$-path between them and (2) for every vertex $u$ in V($D$)$\setminus$$N$ there exists an $H$-path in $D$ from $u$ to $N$.  Under this definition an $H$-kernel is a kernel whenever  $A(H)=\emptyset$.\\
 The  color-class digraph $\mathscr{C}_C$($D$) of $D$ is the digraph whose vertices are the colors represented in the arcs of $D$ and ($i$,$j$) $\in$ $A$($\mathscr{C}_C$($D$)) if and only if there exist two arcs, namely ($u$,$v$) and ($v$,$w$) in $D$, such that ($u$,$v$) has color $i$ and ($v$,$w$) has color $j$.  Since not every $H$-colored digraph has an $H$-kernel and $V(\mathscr{C}_C(D))= V(H)$, the natural question is: what  structural properties of $\mathscr{C}_C(D)$, with respect to the $H$-coloring, imply that $D$ has an $H$-kernel?\\
In this paper we investigate the problem of the existence of an $H$-kernel by means of a partition $\xi$ of $V(H)$  and a partition \{$\xi_1$, $\xi_2$\}  of $\xi$. We establish conditions on the directed cycles  and the directed paths of the digraph $D$, with respect to the partition \{$\xi_1$, $\xi_2$\}. In particular we pay attention to some subestructures produced by the partitions $\xi$ and \{$\xi_1$, $\xi_2$\}, namely $(\xi_{1}, \xi, \xi_{2})$-$H$-subdivisions of $\overrightarrow{C_{3}}$ and $(\xi_{1}, \xi, \xi_{2})$-$H$-subdivisions of $\overrightarrow{P_{3}}$.\\ 
We  give some examples which show that each hypothesis in the main result is tight.}\\

{\bf Keywords:} Kernel, Independent set, Absorbent set,  $H$-kernel, kernel by properly colored paths

\section{Introduction}
\label{sec:in}

For general concepts we refer the reader to \cite{2}. For a digraph $D$, $V(D)$ and $A(D)$ will denote the sets of vertices and arcs of $D$, respectively. A \emph{ directed walk} is a sequence $W$ = ($v_0$, $v_1$,  $\ldots$ , $v_n$) such that ($v_i$,$v_{i+1}$) $\in$ $A(D)$ for each $i$ in \{0,  $\ldots$ , $n-1$\}. The number $n$ is the \emph{ length} of the walk, denoted by $l(W)$. A \emph{ directed  path} is a directed walk ($v_1$,  $\ldots$ , $v_n$) such that $v_i$ $\neq$ $v_j$ for $i$ $\neq$ $j$, \{$i$, $j$\} $\subseteq$ \{0,  $\ldots$ , $n$\}. The directed path ($v_1$, $v_2$, $v_3$, $v_4$) will be denoted by $\overrightarrow{P_{3}}$. A \emph{ directed cycle} is a directed walk ($v_1$,  $\ldots$ , $v_n$, $v_1$) such that $v_i$ $\neq$ $v_j$ for $i$ $\neq$ $j$, \{$i$, $j$\} $\subseteq$ \{0,  $\ldots$ , $n$\}. The directed cycle ($v_1$, $v_2$, $v_3$, $v_1$) will be denoted by $\overrightarrow{C_{3}}$. If $D$ is an infinite digraph, an \emph{ infinite outward path} is an infinite sequence ($v_1$, $v_2$, $\ldots$ ) of different vertices of $D$ such that ($v_i$,$v_{i+1}$) $\in$ $A(D)$ for each $i$ in $\mathbb{N}$. In this paper we are going to write walk, path, cycle instead of directed walk, directed path, directed cycle, respectively. A digraph $D$ is \emph{ acyclic} if it has no   cycle.  The union of walks will be denoted with $\cup$.  Let $W$ = ($v_0$, $v_1$, $\ldots$ , $v_n$) be a walk and \{$v_i$,$v_j$\} $\subseteq$ $V(W)$, with $i \textless j$. The $v_i$$v_j$-walk ($v_i$,$v_{i+1}$, $\ldots$ , $v_{j-1}$,$v_j$) contained in $W$ will be denoted by ($v_i$,$W$,$v_j$).

For an arc ($u$,$v$) the first vertex $u$ is its \emph{ tail} and the second vertex $v$ is its \emph{ head}. For a vertex $v$, the \emph{ out-degree} is the number of arcs, except for loops, with tail $v$, which is denoted by $d^{+}(v)$.

For $S \subseteq  V(D)$  the \emph{subdigraph of D induced  by   S}, denoted by $D$[$S$], is the digraph such that $V(D[S]) = S$ and $A(D[S]) = \{(u,v)\in A(D) :  \{u, v\} \subseteq S\}$.  For $A \subseteq  A(D)$  the \emph{ subdigraph of $D$ induced  by  $A$}, denoted by $D[A]$,  is the digraph such that $V(D[A]) = \{v : v$ is either the tail or the head of an arc $a$ for some $a$ in $A$\}  and its arc set is $A$. We shall say that a subset $S$ of V($D$) is \emph{ independent} if  $D[S]$ has no arcs. A digraph $D$ is a \emph{ bipartite digraph} if there exists a partition \{$V_1$, $V_2$\} of $V(D)$ such that $D[V_i]$ is an independent set for each $i$ in \{1, 2\}. Let $R_1$ = \{$S$ : $S$ is a subset of $V(D)$\} and $R_2$ = \{$T$ : $T$ is a subset of $V(D)$\} two family sets; an $R_1R_2$\emph{-arc} is an arc ($u$, $v$) of $D$ such that $u\in S$ for some $S$ in $R_1$ and $v\in T$ for some $T$ in $R_2$.  A pair of digraphs $D$ and $G$ are \emph{ isomorphic} if there exists a bijection $f : V(D) \rightarrow V(G)$ such that ($x$,$y$) $\in$ $A(D)$ if and only if $(f(x),f(y)) \in A(G)$ ($f$ will be called isomorphism). We will say that a digraph $D$ is \emph{ complete} if for every pair of different vertices $u$ and $v$ in $V(D)$ it holds that $\{(u,v), (v,u)\} \subseteq A(D)$. A digraph $D$ is \emph{ transitive} whenever $\{(u,v), (v,w)\} \subseteq A(D)$ implies that $(u,w) \in A(D)$.

A digraph $D$ is said to be \emph{m-colored} if the arcs of $D$ are colored with $m$ colors.   A \emph{ chromatic class} of $D$ is the set of arcs of a same color. We say that a chromatic class $\cal{C} $ is transitive if $D[\cal C]$ is a transitive digraph. A  path  is called \emph{ monochromatic} if all of its arcs are colored alike. A path $P$ is called \emph{ properly colored} if consecutive arcs in $P$ have different color.  A path  is called \emph{ rainbow} if all arcs have distinct colors.  For an arc ($z_1$,$z_2$) of $D$ we will denote by $c(z_1,z_2)$ its color. \\

Reachability is a topic widely studyed due to its applications. A number of variants of this concept have appeared in the last decades, for example, reachability by monochromatic paths, by rainbow paths, or by properly colored paths. For every notion of reachability, notions of independence and absorbency can be defined, and thus, a notion of kernel too.

In \cite{23} Sands, Sauer and Woodrow prove that every digraph whose arcs are colored with two colors, which has no monochromatic infinite outward path,  contains a set $S$ of vertices such that, no two vertices of $S$ are connected by a monochromatic directed path, and for every vertex $x$ not in $S$ there is a monochromatic directed path from $x$ to a vertex in $S$. In \cite{4}  Linek and Sands generalize  the notion of  monochromatic path in the following way: let $H$ be a digraph, possibly with loops,  and $D$ a digraph without loops; an  $H$\emph{-arc coloring} of $D$ is a function $c: A(D) \rightarrow V(H)$. $D$ is said to be $H$\emph{-colored}  if $D$ has an $H$-arc coloring. A  path $W = (v_0 , \ldots , v_n)$ in $D$ is said to be an $H$\emph{-path} if and only if $(c(v_0,v_1), \ldots , c(v_{n-1},v_n))$ is a  walk in $H$. In \cite{1} Arpin and Linek show an extension of the concept of $H$-path and they define $H$\emph{-walk} as a walk $W = (v_0 , \ldots , v_n)$ such that $(c(v_0,v_1), \ldots ,$ $c(v_{n-1},v_n))$ is a  walk in $H$. We  consider that an arc is an $H$-walk, that  is, a singleton vertex is a walk in $H$.  In \cite{1} Arpin and Linek, in particular,  make inroads in the classification of $\mathscr{B}_3$ (the class of all $H$ such that any multidigraph $D$ arc-colored with the vertices of $H$ has a set $S$ of vertices such that (1) there is no $H$-walk between any two distinct vertices of $S$ ($S$ is \emph{$H$-independent by walks}) and (2) for every $x$ in $V(D)\setminus S$ there is an $H$-walk from $x$ to some point of $S$ ($S$ is \emph{$H$-absorbent by walks}). Let $S$ be a subset of $V(D)$, $S$ is said to be \emph{$H$-kernel by walks} if $S$ is both  $H$-absorbent by walks  and  $H$-independent by walks.
 
 Since not every $uv$-$H$-walk contains a $uv$-$H$-path and the concatenation of two $H$-walks is an $H$-walk, in \cite{33} Galeana-S\'anchez and  Delgado-Escalante introduce the concept of \emph{$H$-kernel} in a digraph $D$ as a subset  $S$ of $V(D)$ which is both \emph{ $H$-absorbent} (for every $x$ in $V(D)\setminus S$ there is an $H$-path from $x$ to some point of $S$) and \emph{$H$-independent}  (there is no $H$-path between any two distinct vertices of $S$).

It follows from the definition of $H$-kernel that: when $A(H)=\emptyset$,  an $H$-kernel is a \emph{ kernel}; when $A(H)=\{(u,u) : u \in V(H)\}$,  an $H$-kernel is a \emph{ kernel by monochromatic paths} (mp-kernel); when $H$ has no loops,  an $H$-kernel is a \emph{ kernel by properly colored paths} (PCP-kernel) and when $H$ has no cycles,  an $H$-kernel is a \emph{ kernel by rainbow paths}. In each of these special cases for $H$, sufficient conditions have been established in order to guarantee the existence of $H$-kernels, see for example \cite{105}, \cite{104},  \cite{22}, \cite{5},   \cite{23}. 

In \cite{6} we find that it is NP-complete to recognize whether a digraph has a kernel. In \cite{106} the authors prove that  the problem of determining whether an $H$-colored digraph has a kernel by $H$-walks is in NP. In \cite{105} we find that (1) it is NP-hard to recognize whether an arc-colored digraph has a PCP-kernel and (2) it is NP-hard to recognize whether an arc-colored digraph has a kernel by rainbow  paths. Due to the difficulty of finding kernels, mp-kernels, alternating kernels and kernels by rainbow paths in arc-colored digraphs, sufficient conditions for the existence of each of these $H$-kernels in arc-colored digraphs have been obtained mainly by study special classes of digraphs.

An interesting digraph associated with an arc-colored digraph $D$ is the \emph{ color-class digraph}, $\mathscr{C}_C(D)$,  which is defined as the digraph whose vertices are the colors represented in the arcs of $D$ and $(i,j) \in A(\mathscr{C}_C(D))$ if and only if there exist two arcs, namely ($u$,$v$) and ($v$,$w$) in $D$, such that ($u$,$v$) has color $i$ and ($v$,$w$) has color $j$ (notice that $\mathscr{C}_C(D)$ can have loops by  definition).  With this associated digraph, Galeana-S\'anchez obtained an extension of Sands, Sauer and Woodrow's theorem  in \cite{100}, for the finite case, as follows.

\begin{theorem}\cite{100}
\label{ccd}
Let $D$ be a finite $m$-colored digraph. If $\mathscr{C}_C(D)$ is a bipartite digraph, then $D$ has an mp-kernel.
\end{theorem}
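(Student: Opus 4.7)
The plan is to reduce the claim to the classical theorem of Sands, Sauer and Woodrow (cited here as \cite{23}), which guarantees that every finite 2-colored digraph has an mp-kernel. The idea is to use the bipartition $\{X_1,X_2\}$ of $V(\mathscr{C}_C(D))$, whose existence is exactly the hypothesis, in order to define a coarser 2-coloring of $D$ that captures enough of the structure.

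Concretely, I would define $c':A(D)\to\{1,2\}$ by setting $c'(a)=i$ whenever $c(a)\in X_i$, and denote by $D'$ the resulting 2-colored digraph on the same vertex and arc set as $D$. The crucial observation is that every monochromatic path in $D'$ has length exactly $1$. Indeed, if $(u,v)$ and $(v,w)$ were two consecutive arcs of $D'$ with $c'(u,v)=c'(v,w)=i$, then their original colors $c(u,v)$ and $c(v,w)$ would both lie in $X_i$, while the definition of $\mathscr{C}_C(D)$ would force $(c(u,v),c(v,w))\in A(\mathscr{C}_C(D))$, contradicting the independence of $X_i$ inside the bipartite digraph $\mathscr{C}_C(D)$. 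Because $D$ is finite, $D'$ has no monochromatic infinite outward path, so applying \cite{23} to $D'$ produces an mp-kernel $N$ of $D'$.

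The remaining step is to verify that the same set $N$ is an mp-kernel of $D$ under its original coloring $c$. For mp-independence in $D$, suppose $P$ were a monochromatic path in $D$ between two distinct vertices of $N$; then all arcs of $P$ share an original color lying in some $X_i$, hence share new color $i$, so $P$ is also monochromatic in $D'$, contradicting the mp-independence of $N$ in $D'$. For mp-absorbency in $D$, take $v\in V(D)\setminus N$; there exists a monochromatic $vN$-path in $D'$, which by the key observation must be a single arc, and a single arc is trivially monochromatic in $D$, so the same arc witnesses absorbency in the original coloring.

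The only real obstacle is spotting the right reduction. Once the 2-coloring $c'$ is chosen and the bipartiteness of $\mathscr{C}_C(D)$ is translated into the statement that nontrivial monochromatic paths in $D'$ cannot exist, the argument is clean: Sands, Sauer and Woodrow's theorem supplies a kernel in $D'$, and both kernel conditions transfer back to $D$ essentially from the definitions.
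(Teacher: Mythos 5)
Your overall strategy---collapse the two sides of the bipartition of $V(\mathscr{C}_C(D))$ into a $2$-coloring $c'$ of $D$ and invoke Sands--Sauer--Woodrow---is a legitimate route, and it is genuinely different from the paper's, which obtains Theorem~\ref{ccd} as a corollary of Theorem~\ref{teo:principal,existenciaHnucleo} by taking $\xi$ to be the partition of $V(H)$ into singletons, letting $\{\xi_1,\xi_2\}$ be induced by the bipartition, and verifying hypotheses 1--5. However, your ``crucial observation'' that every monochromatic path of $D'$ has length exactly $1$ is the weak point. If the two consecutive arcs $(u,v),(v,w)$ have the \emph{same} original color $j\in X_i$, the arc of $\mathscr{C}_C(D)$ you produce is the loop $(j,j)$, and a loop at $j$ does not contradict the bipartition in the setting the theorem is meant to cover: in \cite{100}, and in any reading under which Theorem~\ref{ccd} extends the two-color theorem of \cite{23} as the paper asserts, loops of $\mathscr{C}_C(D)$ must be tolerated inside the parts, since already a $2$-colored digraph containing one monochromatic path of length $2$ creates such a loop. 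Under the strict reading in which a loop destroys bipartiteness, your claim would hold, but then every monochromatic path of $D$ is a single arc and the statement you are proving is a much weaker one that does not contain Sands--Sauer--Woodrow as a special case. Since your verification of mp-absorbency of $N$ in $D$ rests entirely on the length-$1$ claim, that step has a genuine gap.

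The gap is easy to repair, which shows the reduction itself is sound: what bipartiteness really gives is that two \emph{distinct} colors lying in the same part $X_i$ can never occur on consecutive arcs of $D$; hence along any $c'$-monochromatic path the original colors of consecutive arcs must coincide, so every monochromatic path of $D'$ is already monochromatic in $D$ (the converse being trivial). With this corrected key lemma, monochromatic reachability in $D$ and in $D'$ are the same relation, so the mp-kernel of $D'$ supplied by \cite{23} is verbatim an mp-kernel of $D$: both independence and absorbency transfer with no case analysis and, in particular, without any appeal to paths of length $1$. State the lemma in that form and your argument goes through.
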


Notice that in \cite{100}  Galeana-S\'anchez work with a partition of $V(\mathscr{C}_C(D))$ into two independent sets. Since an $H$-kernel is an mp-kernel  whener $A(H)=\{(u,u) : u \in V(H)\}$, we can say that Galeana-S\'anchez work with a partition of $V(H)$ which holds a property with respect to the  digraph $\mathscr{C}_C(D)$. In \cite{galeana}, \cite{gammagaleana}  and \cite{gaytan}  we also find that the authors work with a partitions of $V(H)$ in order to guarantee the existence of mp-kernels.\\

The  results in \cite{galeana}, \cite{gammagaleana}, \cite{100}  and \cite{gaytan}  motivate us to continue with the study of the existence of $H$-kernels by means of  a partition of $V(H)$. In this paper we  work with a partition of $V(H)$, say $\xi$, and with a partition \{$\xi_1$, $\xi_2$\} of $\xi$ which satisfy certain properties. In order to show those properties, we need more definitions.\\

Let $H$ be a digraph and $D$ an $H$-colored digraph. We will say that $D$ is \emph{ transitive by H-paths} if the existence of an $xy$-$H$-path and the existence of a $yz$-$H$-path in $D$ imply that there exists an $xz$-$H$-path in $D$. Let  ($v_0$, $v_1$, $\ldots$ , $v_n$) be a walk in $D$; we  say that there is an \emph{H-obstruction  on  $v_i$} ~ if ($c$($v_{i-1}$,$v_i$),$c$($v_i$,$v_{i+1}$)) $\notin$ A($H$) (if $v_0$ = $v_n$ we  take indices modulo $n$). Let  $\xi$ be a partition of $V(H)$ and  \{$\xi_1$, $\xi_2$\} a partition of $\xi$. $D_i$ is the spanning subdigraph of $D$ such that $A(D_i)=\{a\in A(D) : c(a)\in C ~\text{for some} ~C ~ \text{in} ~ \xi_i\}$ for every $i$ in \{1, 2\} .

 Let $W=(u_{0}, \ldots , u_{l}=v_{0}, \ldots , v_{m}=w_{0}, \ldots ,w_{n}=u_{0})$ be a cycle, we  say that $W$ is a $(\xi_{1}, \xi, \xi_{2})$\emph{-H-subdivision of} $\overrightarrow{C_{3}}$ if $T_{1}=(u_{0},W, u_{l})$ is an $H$-path contained in $D_{1}$, $T_{2}=(v_{0},W, v_{m})$ is an $H$-path contained in $D$ and $T_{3}=(w_{0}, W, w_{n})$ is an $H$-path contained in $D_{2}$, where there are $H$-obstructions on $u_{0}$, $v_{0}$ and $w_{0}$ with respect to $W$. 

Let $P=(u_{0}, \ldots , u_{l}=v_{0}, \ldots , v_{m}=w_{0}, \ldots ,w_{n})$ be a path, we  say that $P$ is a $(\xi_{1}, \xi, \xi_{2})$\emph{-H-subdivision   of} $\overrightarrow{P_{3}}$ if $T_{1}=(u_{0},P, u_{l})$ is an $H$-path contained in $D_{1}$, $T_{2}=(v_{0}, P, v_{m})$ is an $H$-path contained in $D$ and $T_{3}=(w_{0}, P, w_{n})$ is an $H$-path contained in $D_{2}$, where there are $H$-obstructions on  $v_{0}$ and $w_{0}$ with respect to $P$.\\

The main result is the following:\\

Let $H$ be a digraph possibly with loops and $D$ an $H$-colored digraph without isolated vertices. Let $\xi$ =\{$C_{1}$, $C_{2}$, $\ldots$ ,  $C_{k}$\} ($k\geq 2$) be  a partition of $V(H)$ such that every $i$ in $\{1, 2, \ldots , k\}$  holds that   $G_{i}=D[\{a\in A(D) : c(a)\in C_{i}\}]$ is a subdigraph of $D$ which is transitive by $H$-paths in $D$, suppose that $\{a\in A(D) : c(a)\in C_{i}\} \neq \emptyset$. Let \{$\xi_1$, $\xi_2$\} be a partition of $\xi$.  Suppose that
	\begin{enumerate}
		\item for every $i$ in $\{1, 2\}$ and for every cycle $\gamma$ contained in $D_{i}$ there exists $C_m$  in $\xi_{i}$ such that $\gamma$ is contained in $G_{m}$,
		\item for every $i$ in $\{1, 2\}$ and for every $H$-walk $P$ contained in $D_{i}$ there exists $C_{m'}$  in $\xi_{i}$ such that $P$ is contained in $G_{m'}$,
         \item if either there exists a $\xi_{1}\xi_{2}$-arc or there exists a $\xi_{2}\xi_{1}$-arc in $A(\mathscr{C}_{C}(D))$, say $(a,b)$, then $(a,b) \notin A(H)$,
         \item $D$ does not contain a $(\xi_{1}, \xi, \xi_{2})$-$H$-subdivision of $\overrightarrow{C_{3}}$,
		\item if there exists a $ux$-path which is a $(\xi_{1}, \xi, \xi_{2})$-$H$-subdivision of $\overrightarrow{P_{3}}$, for some subset $\{u,x\}$ of $V(D)$, then there exists a $ux$-$H$-path in $D$.
	\end{enumerate}  	

Then $D$ has an $H$-kernel.\\

We will see that  Theorem \ref{ccd} and the main result in \cite{galeana} are direct consequences of the  main result of this paper and we deduce some results that show the existence of  kernels by properly colored paths and kernels by rainbow paths. We  finish with some examples which show that each hypothesis in the main result is tight.\\

We  need the following results.\\

\begin{proposition}[\cite{2}]
\label{prop100}
Every acyclic digraph has a vertex $v$ such that $d^{+}(v)=0$.
\end{proposition}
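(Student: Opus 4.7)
The plan is to reduce the existence of an $H$-kernel to the existence of a classical kernel in an auxiliary digraph. Let $D^{*}$ be the digraph with $V(D^{*})=V(D)$ and $(u,v)\in A(D^{*})$ whenever $u\neq v$ and there is a $uv$-$H$-path in $D$. A kernel of $D^{*}$, viewed as a subset of $V(D)$, is automatically $H$-independent and $H$-absorbent, hence an $H$-kernel of $D$, so the goal becomes to show that $D^{*}$ (and actually every induced subdigraph of $D^{*}$ that inherits the hypotheses) admits a kernel.

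First I would extract the local structure packaged in conditions $1$ and $2$. Because every cycle contained in $D_{i}$ and every $H$-walk contained in $D_{i}$ must already lie in a single $G_{m}$ with $C_{m}\in\xi_{i}$, and because each such $G_{m}$ is transitive by $H$-paths in $D$, the relation ``there is an $H$-path inside $G_{m}$'' behaves like a transitive tournament-like relation on each strongly $H$-connected piece of $G_{m}$. This lets one treat maximal strong $H$-reachability classes inside a single $G_{m}$ as ``super-vertices'', and inside each such class a kernel of size $1$ can be chosen using transitivity together with Proposition \ref{prop100} applied to the condensation.

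Next I would exploit conditions $3$, $4$ and $5$ to control how $H$-paths in $D$ can cross between $\xi_{1}$-colored and $\xi_{2}$-colored arcs, i.e., to control the cyclic structure of $D^{*}$ that is not captured by the previous paragraph. Condition $3$ forbids any two-arc $H$-path whose colors straddle the partition $\{\xi_{1},\xi_{2}\}$, so every $H$-path of $D$ decomposes into maximal subpaths lying entirely in $D_{1}$ or entirely in $D_{2}$, with $H$-obstructions exactly at the transitions. Condition $5$ says that whenever such a decomposed concatenation forms a $(\xi_{1},\xi,\xi_{2})$-$H$-subdivision of $\overrightarrow{P_{3}}$, the endpoints are already joined by an $H$-path in $D$, so repeated composition of $H$-paths does not generate genuinely new long $D^{*}$-arcs beyond what the $(\xi_{1},\xi,\xi_{2})$-pattern produces. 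Combined with condition $4$, this should let me prove by contradiction that any cycle of $D^{*}$ whose arcs cannot be realized inside a single $G_{m}$ would close up into a $(\xi_{1},\xi,\xi_{2})$-$H$-subdivision of $\overrightarrow{C_{3}}$ in $D$, which is excluded.

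With that acyclicity-modulo-$G_{m}$ statement in hand, the construction becomes standard. Form the condensation of $D^{*}$ after identifying, inside each $G_{m}$, its strong components; the previous step yields an acyclic digraph, and Proposition \ref{prop100} provides a sink class. Choose a representative vertex of that sink class using the transitivity-by-$H$-paths of the underlying $G_{m}$, include it in the $H$-kernel, delete it together with its $H$-in-neighbors in $D$, and iterate (or argue by induction on $|V(D)|$, checking that the remaining digraph still satisfies hypotheses $1$--$5$). The main obstacle I anticipate is the last proof-by-contradiction: one has to carefully match the locations of the $H$-obstructions in a putative cycle of $D^{*}$ with the definition of a $(\xi_{1},\xi,\xi_{2})$-$H$-subdivision of $\overrightarrow{C_{3}}$, using condition $5$ to straighten intermediate $(\xi_{1},\xi,\xi_{2})$-$H$-subdivisions of $\overrightarrow{P_{3}}$ into single $H$-paths without accidentally destroying the three required $H$-obstructions. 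Everything else is bookkeeping on top of this structural core.
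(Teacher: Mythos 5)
Your proposal does not address the statement at all. The statement to be proved is Proposition~\ref{prop100}: every (finite) acyclic digraph has a vertex of out-degree zero. What you have written instead is a sketch of the paper's \emph{main theorem} on the existence of an $H$-kernel under hypotheses 1--5, treating Proposition~\ref{prop100} as a known tool (``Proposition \ref{prop100} applied to the condensation'', ``Proposition \ref{prop100} provides a sink class''). You cannot prove a statement by invoking that very statement inside a sketch of a different, much stronger result; as it stands there is no argument here for the proposition itself.

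The proposition is an elementary classical fact (the paper simply cites it from the textbook reference), and its proof is short: in a finite digraph $D$ take a directed path $P=(v_0,\ldots,v_n)$ of maximum length. If $d^{+}(v_n)>0$, there is an arc $(v_n,w)$; $w$ cannot be a new vertex (that would extend $P$, contradicting maximality), so $w=v_i$ for some $i\le n$, and then $(v_i,\ldots,v_n,v_i)$ is a cycle, contradicting acyclicity. Hence $d^{+}(v_n)=0$. Equivalently, if every vertex had positive out-degree one could build an arbitrarily long walk, which in a finite digraph must repeat a vertex and hence contain a cycle (Proposition~\ref{closedcycle}), again a contradiction. Note also that finiteness is essential (an infinite outward path is acyclic with no sink), and the paper's standing assumption that $D$ is finite is what makes the statement true in the form used.
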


\begin{proposition}
\label{closedcycle}
Every closed walk contains a cycle.
\end{proposition}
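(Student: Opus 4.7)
\medskip
\noindent\textbf{Proof proposal for Proposition \ref{closedcycle}.}

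The plan is to argue by extracting a minimum-length closed subwalk. Let $W=(v_{0},v_{1},\ldots,v_{n})$ be a closed walk in a digraph $D$, so that $v_{0}=v_{n}$ and $n\geq 1$. Consider the family $\mathcal{F}$ of all closed subwalks of $W$ of positive length, i.e.\ subwalks $(v_{i},W,v_{j})$ with $i<j$ and $v_{i}=v_{j}$. The family $\mathcal{F}$ is nonempty since $W$ itself lies in $\mathcal{F}$, and it is finite, so we can choose $W'=(u_{0},u_{1},\ldots,u_{m})\in\mathcal{F}$ of minimum length $m\geq 1$.

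Next I would show that $W'$ is a cycle, that is, the vertices $u_{0},u_{1},\ldots,u_{m-1}$ are pairwise distinct (recall $u_{m}=u_{0}$ already). Suppose, for a contradiction, that there exist indices $0\leq i<j\leq m-1$ with $u_{i}=u_{j}$; equivalently, $(i,j)\neq(0,m)$ but the corresponding vertices coincide. Then $(u_{i},W',u_{j})$ is a closed walk in $D$ of length $j-i$, with $1\leq j-i\leq m-1<m$. Since $(u_{i},W',u_{j})$ is a subwalk of $W'$, and $W'$ is a subwalk of $W$, this yields an element of $\mathcal{F}$ of length strictly smaller than $m$, contradicting the minimal choice of $W'$.

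Therefore the consecutive vertices $u_{0},u_{1},\ldots,u_{m-1}$ of $W'$ are pairwise distinct while $u_{m}=u_{0}$, which by the definition of cycle given in the preliminaries exhibits $W'$ as a cycle contained in $W$, proving the proposition.

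The main potential obstacle is purely bookkeeping: one must be careful that the extracted subwalk genuinely uses only arcs already appearing in $W$ (which is immediate since it is a contiguous subsequence), and that the parameter used for minimality (length) is bounded below by $1$ so a minimum exists. Since $D$ is required to have no loops one may even observe that $m\geq 2$, but this refinement is not needed for the statement.
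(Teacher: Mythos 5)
Your argument is correct: extracting a closed subwalk of minimum positive length and showing that any coincidence among $u_{0},\ldots,u_{m-1}$ would yield a strictly shorter closed subwalk is the standard minimality proof of this fact, and your bookkeeping (contiguous subwalks of $W$ stay inside $W$, and $m\geq 2$ since $D$ has no loops, matching the paper's convention that cycles have length at least two) is in order. Note that the paper states Proposition~\ref{closedcycle} without any proof, treating it as folklore, so there is no argument of the authors to compare against; your proposal simply supplies the routine argument that is implicitly being invoked.
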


If every induced subdigraph of $D$ has a kernel, $D$ is said to be a \emph{ kernel perfect digraph}.  Among the classical results on the theory of kernels we have the following theorem.

\begin{theorem}[\cite{5}]
\label{0000}
Every acyclic digraph has a kernel.
\end{theorem}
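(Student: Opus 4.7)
The plan is to build an auxiliary digraph $D^{\ast}$ with $V(D^{\ast})=V(D)$ and $(u,v)\in A(D^{\ast})$ iff there is a $uv$-$H$-path in $D$. Any kernel of $D^{\ast}$ is automatically an $H$-kernel of $D$, so the whole task reduces to proving that $D^{\ast}$ has a kernel. My approach is to exhibit $D^{\ast}$ as a kernel-perfect digraph by showing that its condensation is acyclic and each of its strong components is kernel-perfect.

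First I would analyze the structure of $D^{\ast}$ on each $D_{i}$. Hypothesis (2) says every $H$-walk inside $D_{i}$ lies inside a single $G_{m}$ with $C_{m}\in\xi_{i}$; since $G_{m}$ is transitive by $H$-paths, the relation ``there is an $H$-path'' is transitive on $V(G_{m})$, so $D^{\ast}[V(G_{m})]$ is a transitive digraph, hence kernel-perfect (its condensation is a transitive tournament, acyclic, to which Theorem \ref{0000} applies). Hypothesis (1) further guarantees that every cycle of $D_{i}$ lies inside such a $G_{m}$, so any strong component of $D^{\ast}$ produced purely by arcs of $D_{i}$ is contained in a single $V(G_{m})$.

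Next I would handle the interaction between $D_{1}$ and $D_{2}$. Hypothesis (3) forbids the color of a $\xi_{1}$-arc from being followed by the color of a $\xi_{2}$-arc (and vice versa) along any $H$-walk in $D$. Hence every $H$-path decomposes into maximal blocks that alternate between $D_{1}$ and $D_{2}$, with an $H$-obstruction at each switch. Hypothesis (4) then rules out closed structures with two such switches, i.e.\ $(\xi_{1},\xi,\xi_{2})$-$H$-subdivisions of $\overrightarrow{C_{3}}$, which I will use to prevent any cycle of $D^{\ast}$ from ``truly crossing'' between $D_{1}$ and $D_{2}$. Hypothesis (5) is the complementary lifting tool: whenever two consecutive $H$-path blocks across an obstruction could in principle yield a new $H$-path, they in fact do so, so $D^{\ast}$ is closed under the relevant concatenations.

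The main obstacle I anticipate is showing that every strong component of $D^{\ast}$ is contained in some single $V(G_{m})$. To do this I would pick a shortest closed walk of $H$-paths in $D$ that visits vertices on both sides and analyze its colored structure: using (3) to localize the obstructions between $\xi_{1}$- and $\xi_{2}$-blocks, I would exhibit either a $(\xi_{1},\xi,\xi_{2})$-$H$-subdivision of $\overrightarrow{C_{3}}$ contradicting (4), or a $(\xi_{1},\xi,\xi_{2})$-$H$-subdivision of $\overrightarrow{P_{3}}$ whose shortcut provided by (5) collapses the walk into one lying entirely in a single $D_{i}$, and then by (1) inside a single $G_{m}$, reducing us to the transitive case. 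Once every strong component of $D^{\ast}$ sits inside some $V(G_{m})$, the condensation $\widehat{D^{\ast}}$ has no cycles by Proposition \ref{closedcycle} and Proposition \ref{prop100}, so by Theorem \ref{0000} it admits a kernel; lifting this kernel using a kernel of each relevant transitive $D^{\ast}[V(G_{m})]$ produces a kernel of $D^{\ast}$, and hence the desired $H$-kernel of $D$.
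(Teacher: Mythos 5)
Your proposal does not prove the statement at hand. The statement is Theorem~\ref{0000}, the classical result of von Neumann and Morgenstern~\cite{5} that every acyclic digraph has a kernel: it concerns a plain digraph with no arc-coloring, no $H$, no partition $\xi$, and no subdivisions of $\overrightarrow{C_{3}}$ or $\overrightarrow{P_{3}}$. What you have written is instead a sketch of the paper's main result (Theorem~\ref{teo:principal,existenciaHnucleo}): you invoke hypotheses (1)--(5), the digraphs $D_{1}$, $D_{2}$ and $G_{m}$, and the $(\xi_{1},\xi,\xi_{2})$-$H$-subdivisions, none of which occur in the statement you were asked to prove. Worse, your argument explicitly uses Theorem~\ref{0000} twice (to get kernels of transitive pieces and of the condensation), so read as a proof of Theorem~\ref{0000} it is circular; read as a proof of the main theorem it is answering a different question, and even then its crucial step --- that every strong component of the auxiliary digraph $D^{\ast}$ lies inside a single $V(G_{m})$ --- is only announced as an anticipated obstacle, whereas the paper handles the analogous difficulties through $H$-semikernels modulo $D_{2}$ and the acyclic digraph $D_{\mathcal{S}}$, not through a condensation of $D^{\ast}$.

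What is actually required is an elementary argument about an acyclic digraph $D$ (finite, as in this paper). For instance, proceed by induction on $|V(D)|$: by Proposition~\ref{prop100} there is a vertex $v$ with $d^{+}(v)=0$; delete $v$ together with all its in-neighbors, apply the induction hypothesis to the resulting acyclic digraph to obtain a kernel $N'$, and verify that $N=N'\cup\{v\}$ is independent (no in-neighbor of $v$ survives, and $v$ has no out-arcs) and absorbent (every deleted in-neighbor of $v$ is absorbed by $v$, every other vertex by $N'$). No part of the $H$-coloring machinery is relevant to this statement.
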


\section{Previous results}
\label{sec:first}

Since every isolated vertex in $D$ is in every $H$-kernel of $D$,  in this paper we  suppose that $D$ has no isolated vertices.

From now on $H$ is a finite digraph possibly with loops, $D$ is a finite $H$-colored digraph without loops and $\xi$ =\{$C_{1}$, $C_{2}$, $\ldots$ ,  $C_{k}$\} ($k\geq 2$)  is  a partition of $V(H)$ such that for every $i$ in $\{1, 2, \ldots , k\}$ we have that $\{a\in A(D) : c(a)\in C_{i}\} \neq \emptyset$ and $G_{i}=D[\{a\in A(D) : c(a)\in C_{i}\}]$ is a subdigraph of $D$ which is transitive by $H$-paths in $D$. Notice that \{$\{a\in A(D):c(a)\in C_{i}\} : i \in \{1, 2, \ldots , k\}$\} is a partition of $A(D)$.

\begin{lemma}
\label{lema:indtray}
Let $H$ be a digraph, $D$ an $H$-colored digraph, $l$ in $\{1,2, \ldots ,k\}$ and $(x_{0}$, $x_{1}$, $\ldots$ , $x_{n-1})$ a sequence of $n \geq 2$ vertices, different by pairs. If for each $i$ in $\{1,2, \ldots , n-1\}$ we have that there exists an $x_{i-1}x_{i}$-$H$-path in $G_{l}$, then for each $m$ in $\{1,2, \ldots , n-1\}$ there exists an $x_{0}x_{m}$-$H$-path contained in $G_{l}$.
\end{lemma}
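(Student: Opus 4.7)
The plan is to proceed by induction on $m \in \{1, 2, \ldots, n-1\}$. The base case $m=1$ is immediate: the existence of an $x_{0}x_{1}$-$H$-path in $G_{l}$ is one of the hypotheses, so nothing has to be done.

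For the inductive step, suppose that for some $m \in \{1, \ldots, n-2\}$ an $x_{0}x_{m}$-$H$-path contained in $G_{l}$ has already been exhibited. By hypothesis there is also an $x_{m}x_{m+1}$-$H$-path contained in $G_{l}$, and by the standing convention stated at the beginning of Section~\ref{sec:first} the subdigraph $G_{l}$ is transitive by $H$-paths in $D$. Applying the transitivity property to these two $H$-paths yields an $x_{0}x_{m+1}$-$H$-path contained in $G_{l}$. Notice that $x_{0}\neq x_{m+1}$ because the vertices $x_{0}, x_{1}, \ldots, x_{n-1}$ are pairwise distinct, so the conclusion is well-posed. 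This closes the induction and proves the statement for every $m \in \{1, \ldots, n-1\}$.

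I do not foresee a real obstacle in this argument; it is essentially an iterated use of the hypothesis that $G_{l}$ is transitive by $H$-paths. The only point that requires a little care is to interpret that hypothesis correctly, so that transitivity of $G_{l}$ delivers an $H$-path lying \emph{inside} $G_{l}$ (not merely inside $D$), which is exactly what the conclusion of the lemma demands. No assumption on cycles, on the partition $\{\xi_{1},\xi_{2}\}$, or on obstructions is used at this stage; those will only come into play in later lemmas.
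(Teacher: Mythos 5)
Your proof is correct and follows essentially the same route as the paper's: the paper phrases the induction on the length $n$ of the sequence rather than on the index $m$, but its inductive step is exactly yours, namely combining the $x_{0}x_{n-2}$-$H$-path with the $x_{n-2}x_{n-1}$-$H$-path via the transitivity by $H$-paths of $G_{l}$, read (as you do) so that the resulting $H$-path lies inside $G_{l}$.
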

\begin{proof}
	We proceed by induction on $n$. 
	
If $n=2$, then for the sequence $(x_{0},x_{1})$  we have by hypothesis that there exists an $x_{0}x_{1}$-$H$-path contained in $G_{l}$.
	
Suppose that if $(y_{0},y_{1}, \ldots , y_{m-1})$ is a sequence of $m$ vertices, with $2 \leq m<n$, that satisfies the hypothesis of Lemma~\ref{lema:indtray}, then for every $m'$ in $\{1,2, \ldots , m-1\}$ there exists a $y_{0}y_{m'}$-$H$-path contained in $G_{l}$.
	
Let $(x_{0},x_{1}, \ldots , x_{n-1})$ be a sequence of $n$ vertices, with $n\geq 3$, which satisfies the hypothesis of Lemma~\ref{lema:indtray}. It follows from the induction hypothesis on the sequence $(x_{0},x_{1}, \ldots , x_{n-2})$ that for every $m'$ in $\{1,2, \ldots , n-2\}$ there exists an $x_{0}x_{m'}$-$H$-path $T_{m'}$ which is contained in $G_{l}$. On the other hand, let $T$ be an $x_{n-2}x_{n-1}$-$H$-path which is contained in $G_{l}$ ($T$ there exists by hypothesis) and let $T_{n-2}$ be the $x_{0}x_{n-2}$-$H$-path  which is also contained in $G_{l}$. Since $G_{l}$ is transitive by $H$-paths, we get that there exists an $x_{0}x_{n-1}$-$H$-path in $G_{l}$. Therefore, for every $m'$ in $\{1,2, \ldots , n-1\}$ there exists an $x_{0}x_{m'}$-$H$-path contained in $G_{l}$.
\end{proof}

\begin{definition}
\label{defi:Hseminucleo}
	Let $H$ be a digraph, $D$ an $H$-colored digraph and $S$ a subset of $V(D)$. We   say that $S$ is an H-semikernel if $S$ satisfies the following:
	\begin{enumerate}
		\item $S$ is an $H$-independent set in $D$.
		\item For every $z$ in $V(D)\setminus S$, if there exists a $Sz$-$H$-path in $D$, then there exists a $zS$-$H$-path in D.
	\end{enumerate}
\end{definition}

\begin{lemma}\label{lema:existenciaHseminucleogr}
Let $H$ be a digraph, $D$ an $H$-colored digraph and $r$ in $\{1,2, \ldots ,k\}$. Then 

\begin{enumerate}
\item There exists no a sequence of vertices $(x_{0}, x_{1}, x_{2}, \ldots)$ such that for every $i$ in $\{0,1,2, \ldots\}$ there exists an $x_{i}x_{i+1}$-$H$-path in $G_{r}$ and there exists no an $x_{i+1}x_{i}$-$H$-path in $G_{r}$.
\item There exists $x_{0}$ in $V(G_{r})$ such that $\{x_{0}\}$ is an $H$-semikernel of $G_{r}$.
\end{enumerate}
\end{lemma}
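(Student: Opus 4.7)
The plan is to establish (1) via a contradiction argument that leverages the finiteness of $V(G_r)$ together with Lemma~\ref{lema:indtray}, and then derive (2) by a second short contradiction argument.

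For (1), I assume towards a contradiction that an infinite sequence $(x_0, x_1, x_2, \ldots)$ with the stated properties exists. Since $V(G_r) \subseteq V(D)$ is finite, the pigeonhole principle gives indices $i < j$ with $x_i = x_j$; choose $j$ minimal. By minimality, the vertices $x_0, x_1, \ldots, x_{j-1}$ are pairwise distinct, and in particular so are those in the subsequence $(x_i, x_{i+1}, \ldots, x_{j-1})$. As the hypothesis of (1) supplies an $x_t x_{t+1}$-$H$-path in $G_r$ for each $t$, Lemma~\ref{lema:indtray} applies to this subsequence and yields an $x_i x_{j-1}$-$H$-path in $G_r$. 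On the other hand, applying the hypothesis at the index $t = j-1$ gives the non-existence of an $x_j x_{j-1}$-$H$-path in $G_r$; since $x_j = x_i$, this contradicts the path just produced. (The degenerate case $j = i+1$ can be dismissed separately, since then an $x_i x_{i+1}$-$H$-path would be a closed path, which cannot exist because paths have pairwise distinct vertices.)

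For (2), assume for contradiction that no singleton $\{x_0\}$ with $x_0 \in V(G_r)$ is an $H$-semikernel of $G_r$. Any singleton is $H$-independent vacuously, so the failure must lie in Definition~\ref{defi:Hseminucleo}(2). Hence for each $y \in V(G_r)$ there exists $z(y) \in V(G_r)\setminus\{y\}$ with a $y\,z(y)$-$H$-path in $G_r$ but no $z(y)\,y$-$H$-path in $G_r$. Picking any $x_0 \in V(G_r)$ and recursively setting $x_{i+1} := z(x_i)$ produces an infinite sequence that meets the hypothesis of part (1), contradicting it.

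The main obstacle I expect is ensuring that Lemma~\ref{lema:indtray} is legitimately applicable in (1): the lemma requires pairwise distinct vertices, which is precisely why the minimal-index choice of $j$ is essential — without minimality, the truncated subsequence could itself contain a repetition, and the lemma would not apply. Once this point is handled, transitivity by $H$-paths is already packaged inside Lemma~\ref{lema:indtray}, and part (2) becomes an almost immediate corollary of (1), so the overall argument should be quite short.
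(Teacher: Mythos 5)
Your proposal is correct and follows essentially the same route as the paper: pigeonhole on the finite vertex set, a minimal repetition index to guarantee pairwise-distinct vertices so that Lemma~\ref{lema:indtray} applies and produces an $x_ix_{j-1}$-$H$-path contradicting the non-existence of an $x_jx_{j-1}$-$H$-path, and then deriving (2) from (1) by the recursive construction of a forbidden sequence. The only cosmetic difference is that the paper relabels the repeated block as $(x_0,\ldots,x_{n-1})$ with $x_n=x_0$ before invoking Lemma~\ref{lema:indtray}, while you work directly with the indices $i<j$; the content is identical.
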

\begin{proof}

\begin{enumerate}
\item Proceeding by contradiction, suppose that there exists a sequence of vertices $(x_{0}, x_{1}, \ldots)$ such that for every $i$ in $\{0, 1, 2, \ldots\}$ there exists an $x_{i}x_{i+1}$-$H$-path in $G_{r}$ and there exists no an $x_{i+1}x_{i}$-$H$-path in $G_{r}$.

Since $D$ is a finite digraph, we get that there exists a subset $\{i, j\}$ of $\{0, 1, 2, \ldots\}$, with $i<j$, such that $x_{i}=x_{j}$. Let $j_{0}=$min$\{j\in \mathbb{N} : x_{j}=x_{i}$ for some $i<j\}$ be and $i_{0}$ in $\{0, 1, \ldots , j_{0}-1\}$ such that $x_{i_{0}}=x_{j_{0}}$.  Notice that it follows from the choice of $j_{0}$ that $(x_{i_0}, x_{i_0 +1}, \ldots , x_{j_0 -1})$ is a sequence of vertices, different by pairs. Suppose without loss of generality that $i_{0}=0$, $j_{0}=n$, then $(x_{0}, x_{1}, x_{2}, \ldots , x_{n-1})$ is a sequence of $n \geq 2$ vertices, different by pairs, such that for each $i$ in $\{0, 1, \ldots , n-1\}$ we have that there exists an $x_{i}x_{i+1}$-$H$-path in $G_{r}$ and there exists no an $x_{i+1}x_{i}$-$H$-path in $G_{r}$. For each $i$ in $\{0, 1, \ldots , n-1\}$ let $T_{i}$ be an $x_{i}x_{i+1}$-$H$-path in $G_{r}$ (indices modulo $n$), then we get from  Lemma~\ref{lema:indtray} that there exists an $x_{0}x_{n-1}$-$H$-path contained in $G_{r}$; that is, there exists an $x_{n}x_{n-1}$-$H$-path contained in $G_{r}$ which is not possible.

\item Proceeding by contradiction, suppose that for each $x$ in $V(G_{r})$, $\{x\}$ is not an $H$-semikernel of $G_{r}$; that is, for every $x$ in $V(G_{r})$ there exists $y$ in $V(G_{r})\setminus \{x\}$ such that there exists an $xy$-$H$-path contained in $G_{r}$ and there exists no a $yx$-$H$-path in $G_{r}$. Therefore, for every $n$ in $\mathbb{N}$ we have that given $x_n$ in $V(G_{r})$ there exists $x_{n+1}$ in $V(G_{r})\setminus \{x_{n}\}$ such that there exists an $x_{n}x_{n+1}$-$H$-path in $G_{r}$ and there exists no an $x_{n+1}x_{n}$-$H$-path in $G_{r}$ which implies that $(x_{0}, x_{1}, x_{2}, \ldots)$ is a sequence of vertices  that contradicts (1).
\end{enumerate}
\end{proof}

\begin{lemma}\label{lema:unionhtray}
	Let $H$ be a digraph and $D$ an $H$-colored digraph. Suppose that
	\begin{enumerate}
		\item For every cycle $\gamma$ in $D$ there exists $i$  in $\{ 1, 2, \ldots , k\}$ such that $\gamma$ is contained in $G_{i}$.
		\item For every $H$-walk $P$ in $D$ there exists $j$  in $\{ 1, 2, \ldots , k\}$ such that $P$ is contained in $G_{j}$.
	\end{enumerate} 
	
	If $S=(u_{0}, u_{1}, \ldots , u_{n-1})$ is a sequence of $n \geq 2$ vertices, different by pairs, such that for every $i$ in $\{0, 1, \ldots , n-1\}$ there exists a $u_{i}u_{i+1}$-$H$-path in $D$, say $T_{i}$, then there exists $j$  in $\{ 1, 2, \ldots , k\}$ such that $\bigcup \limits _{i=0} ^{n-1} T_{i}$ is contained in $G_{j}$ (indices modulo $n$).
\end{lemma}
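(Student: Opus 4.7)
The plan is to prove the statement by induction on $n \geq 2$, using the closed walk $W = T_{0}\cup T_{1}\cup \cdots \cup T_{n-1}$ as the main object. By Proposition~\ref{closedcycle}, $W$ contains a cycle $\gamma$; by hypothesis 1 of the lemma, $\gamma \subseteq G_{l}$ for some $l \in \{1,\ldots ,k\}$; and by hypothesis 2 each $T_{i}$, being an $H$-walk, lies in some $G_{j_{i}}$. The task therefore reduces to showing that all the indices $j_{i}$ coincide with $l$.

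For the base case $n=2$ the cycle $\gamma$ inside $T_{0}\cup T_{1}$ cannot lie within a single $T_{i}$, because a simple path contains no cycle; hence $\gamma$ uses an arc of $T_{0}$ and an arc of $T_{1}$. Matching each $\gamma$-arc of $T_{i}$ (whose colour lies in $C_{l}$) against the containment $T_{i}\subseteq G_{j_{i}}$ gives $j_{0}=j_{1}=l$. For the inductive step $n\geq 3$ I use the observation that the standard cycle-extraction procedure produces $\gamma$ as a contiguous subwalk of $W$, so its arcs occupy a cyclically consecutive block $T_{a}, T_{a+1},\ldots ,T_{b}$; the same no-cycle-in-a-path remark now forces $b\geq a+1$, and thus at least one pair of cyclically consecutive indices satisfies $j_{a}=j_{a+1}=l$. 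After a cyclic relabelling I may assume $j_{0}=j_{1}=l$; Lemma~\ref{lema:indtray} applied to the triple $(u_{0},u_{1},u_{2})$ inside $G_{l}$ produces a $u_{0}u_{2}$-$H$-path $T_{0}'\subseteq G_{l}$. Then I apply the induction hypothesis to the shorter cyclic sequence $(u_{0},u_{2},u_{3},\ldots ,u_{n-1})$ with $H$-paths $T_{0}',T_{2},T_{3},\ldots ,T_{n-1}$: it yields an index $l'$ with all of them in $G_{l'}$, and the containment $T_{0}'\subseteq G_{l}$ forces $l'=l$, so every original $T_{i}$ lies in $G_{l}$.

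The only nonroutine point is the contiguity step that identifies which $T_{i}$-blocks the extracted $\gamma$ visits: the cycle is a single contiguous piece of $W$, so the indices of the blocks it meets form a (cyclic) interval, and the fact that each $T_{i}$ is a simple path rules out the degenerate case of $\gamma$ sitting inside a single block. Once that is secured, Lemma~\ref{lema:indtray} does the amalgamation of two $H$-paths in the common colour class, and the induction descends cleanly.
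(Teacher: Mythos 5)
Your proof is correct, and its inductive step takes a genuinely different route from the paper's. After the same base case, the paper proceeds by a case analysis: if some $u_i$ is not an $H$-obstruction in $\bigcup T_i$, it merges $T_{i-1}\cup T_{i}$ as an $H$-walk via hypothesis 2 and transitivity; otherwise it analyses how the $T_i$ intersect (consecutive paths sharing an extra vertex, the whole union being a cycle, or two distant paths meeting at a vertex, in which case the cyclic sequence is split there into two shorter cyclic sequences and the induction hypothesis is applied twice), each time using that the chromatic classes partition $A(D)$ to identify the indices. You bypass all of this by extracting from the closed walk $W=T_0\cup\cdots\cup T_{n-1}$ a cycle that is a \emph{contiguous} subwalk: since each $T_i$ is a path and hence acyclic, such a cycle must use at least one arc from each of two consecutive blocks $T_a,T_{a+1}$, and hypothesis 1 together with the partition of $A(D)$ forces $j_a=j_{a+1}=l$; transitivity of $G_l$ (your appeal to Lemma \ref{lema:indtray}) then merges the pair and the induction descends on the shortened cyclic sequence, just as in the paper's merging steps. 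What your route buys: a much shorter inductive step with no case distinction on obstructions or intersections, and you only invoke hypothesis 2 for $H$-paths rather than for arbitrary $H$-walks. What it costs: you rely on a refinement of Proposition \ref{closedcycle} that the paper does not state, namely that a closed walk contains a cycle occurring as a contiguous subwalk. This is true and standard --- choose $j$ minimal such that $v_j=v_i$ for some $i<j$; then $(v_i,\ldots,v_j)$ has no internal repetitions and is a cycle of length at least two since $D$ is loopless (the same minimal-index device the paper uses in Lemma \ref{lema:existenciaHseminucleogr}) --- but you should spell out this short argument, because the bare statement of Proposition \ref{closedcycle} does not give contiguity, and your identification of a consecutive pair of blocks inside a common $G_l$ depends on it.
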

\begin{proof}  We proceed by induction on $n$, the number of vertices in $S$.
	
If $n=2$, then $T_{0}\cup T_{1}$ is a closed walk which contains a cycle $\gamma$ (by Proposition \ref{closedcycle}). It follows from hypothesis (1) that there exists $j$  in $\{ 1, 2, \ldots , k\}$ such that $\gamma$ is contained in $G_{j}$. On the other hand, let $\{i_{0}, i_{1}\}$ be a subset of $\{ 1, 2, \ldots , k\}$ such that $T_{0}$ is contained in $G_{i_{0}}$ and $T_{1}$ is contained in $G_{i_{1}}$ (by hypothesis (2)). Since $\gamma$ contains arcs of $T_{0}$ and $T_{1}$ we get that  $i_{0}=j=i_{1}$ which implies that $T_{0}\cup T_{1}$ is contained in $G_{j}$.
	
Suppose that if $S'=(u_{0}, u_{1}, \ldots , u_{l-1})$ is a sequence of $l$ vertices, different by pairs,  with $n-1\geq l\geq2$, such that for every $i$ in $\{0, 1, \ldots , n-1\}$ there exists a $u_{i}u_{i+1}$-$H$-path in $D$, say $T'_{i}$, then there exists $j$  in $\{ 1, 2, \ldots , k\}$ such that $\bigcup \limits _{i=0} ^{l-1} T'_{i}$ is contained in $G_{j}$. 
	
Let $S=(u_{0}, u_{1}, \ldots , u_{n-1})$ be a sequence of $n$ vertices which satisfies the hypothesis of Lema \ref{lema:unionhtray}. From hypothesis (2) we get that for every $i$ in $\{0, 1, \ldots , n-1\}$ there exists $i'$  in $\{ 1, 2, \ldots , k\}$ such that $T_{i}$ is contained in $ G_{i'}$.
	
	Consider two cases:

	\it{Case 1.}  There exists $i$ in $\{0, 1, \ldots , n-1\}$ such that $u_{i}$ is not an $H$-obstruction in $\bigcup \limits _{i=0}^{n-1} T_{i}$.
	
	Suppose without loss of generality that $u_{1}$ is not an $H$-obstruction in $\bigcup \limits _{i=0}^{n-1} T_{i}$. Then $T_{0}\cup T_{1}$ is a $u_{0}$$u_{2}$-$H$-walk in $D$ which implies that there exists $m$  in $\{ 1, 2, \ldots , k\}$  such that $T_{0}\cup T_{1}$ is contained in $G_{m}$ (by hypothesis (2)).  Since $G_{m}$ is transitive by $H$-paths and both $T_{0}$ and $T_{1}$ are contained in $G_{m}$ it follows that there exists a $u_{0}$$u_{2}$-$H$-path $T$ in $G_{m}$. Consider the sequence $S'=(u_{0}, u_{2}, u_{3}, \ldots , u_{n-1})$, which is a sequence that satisfies the hypotheses of Lemma ~\ref{lema:unionhtray} and so it follows from the  induction hypothesis  that there exists $l$  in $\{ 1, 2, \ldots , k\}$ such that $T\cup T_{2}\cup T_{3}\cup \ldots \cup T_{n-1}$ is contained in $G_{l}$ and since $T$ is contained in $G_{m}$ we have that $m=l$. Therefore, $\bigcup \limits _{i=0}^{n-1} T_{i}$ is contained in $G_{m}$.

	\it{Case 2.} For every $i$ in $\{0, 1, \ldots , n-1\}$ we have that $u_{i}$ is an $H$-obstruction in $\bigcup \limits _{j=0}^{n-1}T_{j}$.
	
	Consider two subcases.

\it{Case 2.1.}	 $(V(T_{i})-\{u_{i+1}\})\cap V(T_{i+1})\neq \emptyset$ for some $i$ in $\{0, 1, \ldots , n-1\}$. 

Let $v$ be a vertex in $(V(T_{i})-\{u_{i+1}\})\cap V(T_{i+1})$, then $(v, T_{i}, u_{i+1})\cup (u_{i+1}, T_{i+1}, v)$ is a closed walk which contains a cycle $\gamma$ ( by Proposition \ref{closedcycle}). It follows from hypothesis (1) that there exists $m$  in $\{ 1, 2, \ldots , k\}$ such that $\gamma$ is contained in $G_{m}$ and since $\gamma$ contains arcs of both $T_{i}$ and $T_{i+1}$ it follows that $T_{i}$ and $T_{i+1}$ are contained in $G_{m}$; that is, $i'=m=(i+1)'$.  Because  of that $G_{m}$ is transitive by $H$-paths it follows that there exists a $u_{i}u_{i+2}$-$H$-path $T$ contained in $G_{m}$. Therefore, since $S'=(u_{0}, u_{1}, \ldots , u_{i}, u_{i+2}, \ldots , u_{n-1})$ is a sequence which holds the hypotheses of Lemma~\ref{lema:unionhtray} we get from induction hypothesis that there exists $l$  in $\{ 1, 2, \ldots , k\}$ such that $T_{0}\cup T_{1}\cup \ldots \cup T_{i-1} \cup T \cup T_{i+2}\cup \ldots \cup T_{n-1}$ is contained in $G_{l}$. Since $T$ is contained in $G_{m}$ we get that $m=l$. Therefore, $\bigcup \limits _{i=0}^{n-1} T_{i}$ is contained in $G_{m}$.

\it{Case 2.2.}  $(V(T_{i})-\{u_{i+1}\})\cap V(T_{i+1})=\emptyset$ for every $i$ in $\{0, 1, \ldots , n-1\}$.

If $V(T_{i})\cap V(T_{j})=\emptyset$ for every subset $\{i, j\}$ of $\{1, 2, \ldots , k\}$ such that $|i-j|\geq 2$, then $\bigcup \limits _{i=0}^{n-1} T_{i}$ is a cycle and by hypothesis (1) we get that there exists $m$  in $\{ 1, 2, \ldots , k\}$ such that $\bigcup \limits _{i=0}^{n-1} T_{i}$ is contained in $G_{m}$.
			
Suppose that $V(T_{i})\cap V(T_{j})\neq \emptyset$ for some subset \{$i$, $j$\} of $\{1, 2, \ldots , k\}$ with $|i-j|\geq 2$. Assume without loss of generality that $i<j$ and let $v$ be a vertex in $V(T_{i})\cap V(T_{j})$.

 If $v=u_{i}$, then consider the $H$-paths $T_{j}'=(u_{j}, T_{j}, v=u_{i})$ and $T_{j}''=(v=u_{i}, T_{j}, u_{j+1})$ such that $T_{j}'\cup T_{j}''=T_{j}$. Since the sequences $S'=(u_{i}=v, u_{j+1}, u_{j+2}, \ldots , u_{n-1}, u_{0}, u_{1}, \ldots , u_{i-1})$ and $S''=(u_{i}=v, u_{i+1}, u_{i+2}, \ldots , u_{j-1}, u_{j})$ hold the induction hypothesis we get that there exist $r$ and $s$ in $\{ 1, 2, \ldots , k\}$ such that $T_{j}''\cup( \bigcup \limits _{m=j+1}^{n-1} T_{m})\cup (\bigcup \limits _{m=0}^{i-1} T_{m})$ is contained in $G_{r}$ and $(\bigcup \limits _{m=i}^{j-1}T_{m})\cup T_{j}'$ is contained in $G_{s}$. Since \{$\{a\in A(D):c(a)\in C_{i}\} : i \in \{1, 2, \ldots , k\}$\} is a partition of $A(D)$ it follows that $r=j'=s$ (recall that $T_{j}$ is contained in $G_{j'}$). Therefore, $\bigcup \limits _{m=0}^{n-1} T_{m}$ is contained in $G_{s}$.

If $v=u_{i+1}$, then consider the $H$-paths $T_{j}'=(u_{j}, T_{j}, v=u_{i+1})$ and $T_{j}''=(v=u_{i+1}, T_{j}, u_{j+1})$ such that $T_{j}'\cup T_{j}''=T_{j}$. It follows from the induction hypothesis on the two sequences  $S'=(u_{i+1}=v, u_{j+1}, u_{j+2}, \ldots , u_{n-1}, u_{0}, u_{1}, \ldots , u_{i})$ and $S''=(u_{i+1}=v, u_{i+2}, \ldots , u_{j-1}, u_{j})$ that there exist $r$ and $s$ in $\{1, 2, \ldots , k\}$ such that  $T_{j}''\cup( \bigcup \limits _{m=j+1}^{n-1} T_{m})\cup (\bigcup \limits _{m=0}^{i} T_{m})$ is contained in $G_{r}$ and $(\bigcup \limits _{m=i+1}^{j-1}T_{m})\cup T_{j}'$ is contained in $G_{s}$. Notice that $r=j'=s$ which implies that $\bigcup \limits _{m=0}^{n-1}T_{m}$ is contained in $G_{s}$.

If $v=u_{j}$, then we can consider the $H$-paths $T_{i}'=(u_{i}, T_{i}, v=u_{j})$ and $T_{i}''=(v=u_{j}, T_{i}, u_{i+1})$  (notice that $T_{i}'\cup T_{i}''=T_{i}$). Since the sequences $S'=(u_{j}=v, u_{i+1}, u_{i+2}, \ldots , u_{j-1})$ and $S''=(u_{j}=v, u_{j+1}, \ldots , u_{n-1}, u_{0}, u_{1}, \ldots , u_{i})$ hold the induction hypothesis we get that there exist $r$ and $s$ in $\{1, 2, \ldots , k\}$ such that $T_{i}''\cup( \bigcup \limits _{m=i+1}^{j-1} T_{m})$ is contained in $G_{r}$ and $(\bigcup \limits _{m=j}^{n-1}T_{m})\cup (\bigcup \limits _{m=0}^{i-1} T_{m})\cup T_{i}'$ is contained in $G_{s}$. Because of that $r=i'=s$ it follows that $\bigcup \limits _{m=0}^{n-1}T_{m}$ is contained in $G_{s}$.

If $v=u_{j+1}$, then consider the $H$-paths $T_{i}'=(u_{i}, T_{i}, v=u_{j+1})$ and $T_{i}''=(v=u_{j+1}, T_{i}, u_{i+1})$ such that $T_{i}'\cup T_{i}''=T_{i}$. Since the sequences $S'=(u_{j+1}=v, u_{i+1}, u_{i+2}, \ldots , u_{j})$ and $S''=(u_{j+1}=v, u_{j+2}, \ldots , u_{n-1}, u_{0}, u_{1}, \ldots , u_{i})$ hold the induction hypothesis it follows that there exist $r$ and $s$ in $\{1, 2, \ldots , k\}$ such that $T_{i}''\cup( \bigcup \limits _{m=i+1}^{j} T_{m})$ is contained in $G_{r}$ and $(\bigcup \limits _{m=j+1}^{n-1}T_{m})\cup (\bigcup \limits _{m=0}^{i-1} T_{m})\cup T_{i}'$ is contained in $G_{s}$. Because of that $r=i'=s$ we get that $\bigcup \limits _{m=0}^{n-1}T_{m}$ is contained in $G_{s}$.

Suppose that $v\notin \{u_{i}, u_{i+1}, u_{j}, u_{j+1}\}$, then consider the $H$-paths $T_{i}'=(u_{i}, T_{i}, v)$, $T_{i}''=(v, T_{i}, u_{i+1})$, $T_{j}'=(u_{j}, T_{j}, v)$, $T_{j}''=(v, T_{j}, u_{j+1})$ such that $T_{i}'\cup T_{i}''=T_{i}$ and $T_{j}'\cup T_{j}''=T_{j}$. Since the sequences $S'=(v, u_{i+1}, u_{i+2}, \ldots , u_{j})$ and $S''$ = $(v$, $u_{j+1}$, $u_{j+2}$, $\ldots$ , $u_{n-1}$, $u_{0}$, $u_{1}$, $\ldots$ , $u_{i})$ hold the induction hypothesis we get that there exist $r$ and $s$ in $\{1, 2, \ldots , k\}$ such that $T_{i}''\cup( \bigcup \limits _{m=i+1}^{j-1} T_{m})\cup T_{j}'$ is contained in $G_{r}$ and $T_{j}''\cup (\bigcup \limits _{m=j+1}^{n-1}T_{m})\cup (\bigcup \limits _{m=0}^{i-1} T_{m})\cup T_{i}'$ is contained in $G_{s}$. Since \{$\{a\in A(D):c(a)\in C_{i}\} : i \in \{1, 2, \ldots , k\}$\} is a partition of $A(D)$ it follows that $r=i'=j'=s$ (recall that $T_{j}$ is contained in $G_{j'}$ and $T_{i}$ is contained in $G_{i'}$) which implies that  $\bigcup \limits _{m=0}^{n-1}T_{m}$ is contained in $G_{s}$.

Therefore, we conclude from Cases 1 and 2 that there exists $s$ in $\{1, 2, \ldots , k\}$ such that $\bigcup \limits _{i=0}^{n-1}T_{i}$ is contained in $G_{s}$. 
\end{proof}

\begin{lemma}\label{lema:noexistenciasucesion}
	Let $H$ be a digraph and $D$ an $H$-colored digraph. Suppose that
	\begin{enumerate}
		\item For every cycle $\gamma$ in $D$ there exists $i$  in $\{ 1, 2, \ldots , k\}$ such that $\gamma$ is contained in $G_{i}$.
		\item For every $H$-walk $P$ in $D$ there exists $j$  in $\{ 1, 2, \ldots , k\}$ such that $P$ is contained in $G_{j}$.
	\end{enumerate} 
	
	Then there exists no a sequence of vertices $(x_{0}, x_{1}, x_{2}, \ldots)$ such that for every $i$ in $\{0, 1, 2, \ldots \}$ there exists an $x_{i}x_{i+1}$-$H$-path in $D$ and there  exists no an $x_{i+1}x_{i}$-$H$-path in $D$.
\end{lemma}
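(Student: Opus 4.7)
The plan is to argue by contradiction, mirroring the proof of Lemma~\ref{lema:existenciaHseminucleogr}(1) but first using Lemma~\ref{lema:unionhtray} to trap all the relevant $H$-paths inside a single $G_j$, so that the sharper Lemma~\ref{lema:indtray} can be applied.

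Suppose, for a contradiction, that there is a sequence $(x_0, x_1, x_2, \ldots)$ such that for every $i \in \{0,1,2,\ldots\}$ there is an $x_i x_{i+1}$-$H$-path in $D$ and no $x_{i+1} x_i$-$H$-path in $D$. Because $D$ is finite, there exist $i < j$ with $x_i = x_j$. As in the proof of Lemma~\ref{lema:existenciaHseminucleogr}(1), pick $j_0 = \min\{ j \in \mathbb{N} : x_j = x_i \text{ for some } i < j\}$ and a corresponding $i_0 < j_0$ with $x_{i_0} = x_{j_0}$. The minimality of $j_0$ guarantees that $(x_{i_0}, x_{i_0+1}, \ldots, x_{j_0-1})$ consists of pairwise different vertices. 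After relabelling we may assume $i_0 = 0$ and $j_0 = n$, so that $(x_0, x_1, \ldots, x_{n-1})$ is a sequence of $n \geq 2$ distinct vertices, $x_n = x_0$, and for each $i \in \{0, \ldots, n-1\}$ (indices modulo $n$) there is an $x_i x_{i+1}$-$H$-path $T_i$ in $D$ but no $x_{i+1} x_i$-$H$-path in $D$.

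Now I would invoke Lemma~\ref{lema:unionhtray} on the sequence $(x_0, x_1, \ldots, x_{n-1})$ together with the $H$-paths $T_0, T_1, \ldots, T_{n-1}$; the hypotheses (1) and (2) of the current lemma are exactly those required. This yields an index $j \in \{1, 2, \ldots, k\}$ such that $\bigcup_{i=0}^{n-1} T_i \subseteq G_j$. In particular every $T_i$ is an $x_i x_{i+1}$-$H$-path \emph{contained in $G_j$}.

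Apply Lemma~\ref{lema:indtray} to the sequence $(x_0, x_1, \ldots, x_{n-1})$ with $l = j$, using the $H$-paths $T_0, T_1, \ldots, T_{n-2}$ in $G_j$. With $m = n-1$, this delivers an $x_0 x_{n-1}$-$H$-path contained in $G_j$, hence in $D$. But $x_0 = x_n$, so this is an $x_n x_{n-1}$-$H$-path in $D$, contradicting the assumption that no $x_{i+1} x_i$-$H$-path exists for $i = n-1$. The only delicate point in executing this plan is the index bookkeeping for the extraction of the minimal repetition and the application of Lemma~\ref{lema:unionhtray}; once the paths are all placed in a common $G_j$, the contradiction is obtained immediately from Lemma~\ref{lema:indtray}.
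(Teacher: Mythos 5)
Your proposal is correct and follows essentially the same route as the paper: extract a minimal repetition to get pairwise distinct $x_0,\ldots,x_{n-1}$ with $x_n=x_0$, apply Lemma~\ref{lema:unionhtray} to place all the paths $T_i$ in a single $G_j$, and then derive the contradiction. The only cosmetic difference is that the paper finishes by feeding the resulting periodic sequence into Lemma~\ref{lema:existenciaHseminucleogr}(1), whereas you invoke Lemma~\ref{lema:indtray} directly, which is exactly what the proof of that lemma does anyway.
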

\begin{proof} Proceeding by contradiction, suppose that there exists a sequence of vertices $(x_{0}, x_{1}, \ldots)$ such that for every $i$ in $\{0, 1, 2, \ldots\}$ there exists an $x_{i}x_{i+1}$-$H$-path in $D$ and there exists no an $x_{i+1}x_{i}$-$H$-path in $D$.
	
Since $D$ is a finite digraph, we get that there exists a subset $\{i, j\}$ of $\{0, 1, 2, \ldots\}$, with $i<j$, such that $x_{i}=x_{j}$. Let $j_{0}=$min$\{j\in \mathbb{N} : x_{j}=x_{i}$ for some $i<j\}$ be and $i_{0}$ in $\{0, 1, \ldots , j_{0}-1\}$ such that $x_{i_{0}}=x_{j_{0}}$.   Suppose without loss of generality that $i_{0}=0$, $j_{0}=n$, then $(x_{0}, x_{1}, x_{2}, \ldots , x_{n-1})$ is a sequence of $n \geq 2$ vertices, different by pairs, such that for each $i$ in $\{0, 1,  \ldots , n-1\}$ we have that there exists an $x_{i}x_{i+1}$-$H$-path in $D$ and there exists no an $x_{i+1}x_{i}$-$H$-path in $D$. For each $i$ in $\{0, 1,  \ldots , n-1\}$ let $T_{i}$ be an $x_{i}x_{i+1}$-$H$-path in $D$ (indices modulo $n$), then we get from Lemma~\ref{lema:unionhtray} that there exists $l$ in $\{1,  \ldots , k\}$ such that $\bigcup \limits _{m=0}^{n-1}T_{m}$ is contained in $G_{l}$, which implies that for each $i$ in $\{0, 1, \ldots , n-1\}$ we have that $T_{i}$ is contained in $G_{l}$. Therefore, $(x_{0}, x_{1}, x_{2}, \ldots , x_{n-1}, x_{n} = x_0, x_{1}, x_{2}, \ldots , x_{n-1}, \ldots)$ is a sequence of vertices that contradicts Lemma \ref{lema:existenciaHseminucleogr}(1).\\
\end{proof}

\begin{lemma}\label{lema:existenciaHseminucleoD}
	Let $H$ be a digraph and $D$ an $H$-colored digraph. Suppose that
	\begin{enumerate}
		\item For every cycle $\gamma$ in $D$ there exists $i$  in $\{ 1, 2, \ldots , k\}$ such that $\gamma$ is contained in $G_{i}$.
		\item For every $H$-walk $P$ in $D$ there exists $j$  in $\{ 1, 2, \ldots , k\}$ such that $P$ is contained in $G_{j}$.
	\end{enumerate} 
	
	Then there exists $x_{0}$ in $V(D)$ such that $\{x_{0}\}$ is an $H$-semikernel of $D$.
\end{lemma}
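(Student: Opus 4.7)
The plan is to mirror the proof of Lemma~\ref{lema:existenciaHseminucleogr}(2), which produced an $H$-semikernel inside each $G_{r}$, but to work in $D$ itself and to invoke Lemma~\ref{lema:noexistenciasucesion} in place of Lemma~\ref{lema:existenciaHseminucleogr}(1). The whole argument should be short because essentially all of the combinatorial work has already been absorbed by Lemma~\ref{lema:unionhtray} and its consequence Lemma~\ref{lema:noexistenciasucesion}.

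First I would record the trivial observation that for any singleton $\{x_{0}\}\subseteq V(D)$, the set $\{x_{0}\}$ is automatically $H$-independent, since the $H$-independence condition is vacuous on a one-element set. Hence $\{x_{0}\}$ fails to be an $H$-semikernel of $D$ if and only if there exists $z\in V(D)\setminus\{x_{0}\}$ admitting an $x_{0}z$-$H$-path in $D$ but no $zx_{0}$-$H$-path in $D$.

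Next, arguing by contradiction, I would assume that $\{x\}$ is not an $H$-semikernel of $D$ for any $x\in V(D)$. Then for every $x\in V(D)$ one may pick some $y\in V(D)\setminus\{x\}$ with an $xy$-$H$-path in $D$ but no $yx$-$H$-path in $D$. Fixing any starting vertex $x_{0}$ and iterating this choice, we obtain an infinite sequence $(x_{0},x_{1},x_{2},\ldots)$ of vertices of $D$ such that for every $i\geq 0$ there is an $x_{i}x_{i+1}$-$H$-path in $D$ and there is no $x_{i+1}x_{i}$-$H$-path in $D$. This is precisely the configuration that Lemma~\ref{lema:noexistenciasucesion} forbids under the two hypotheses of the present lemma, giving the desired contradiction.

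The only subtlety worth double-checking is that the hypotheses of Lemma~\ref{lema:noexistenciasucesion} really do transfer verbatim: both lemmas assume exactly the same two conditions on $D$ (every cycle of $D$ lies in some $G_{i}$ and every $H$-walk of $D$ lies in some $G_{j}$), so the application is immediate. There is no genuine obstacle here; the real content of the statement has been packaged into the preceding lemmas, and this result is essentially a direct corollary of Lemma~\ref{lema:noexistenciasucesion} via the standard sink-finding contradiction scheme.
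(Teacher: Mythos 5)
Your proposal is correct and follows exactly the paper's argument: assume no singleton is an $H$-semikernel of $D$, iterate the choice of a successor to build an infinite sequence $(x_{0},x_{1},x_{2},\ldots)$ with $x_{i}x_{i+1}$-$H$-paths but no $x_{i+1}x_{i}$-$H$-paths, and contradict Lemma~\ref{lema:noexistenciasucesion}, whose hypotheses coincide with those of the present lemma. Nothing is missing.
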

\begin{proof}Proceeding by contradiction, suppose that for every $x$ in $V(D)$, $\{x\}$ is not an $H$-semikernel of $D$; that is, for every  $x$ in $V(D)$ there exists $y$ in $V(D)\setminus \{x\}$ such that there exists an $xy$-$H$-path in $D$ and there exists no a $yx$-$H$-path in $D$. Therefore, for every $n$ in $\mathbb{N}$ we have that given $x_n$ in $V(D)$ there exists $x_{n+1}$ in $V(D)\setminus \{x_{n}\}$ such that there exists an $x_{n}x_{n+1}$-$H$-path in $D$ and there exists no an $x_{n+1}x_{n}$-$H$-path in $D$ which implies that $(x_{0}, x_{1}, x_{2}, \ldots)$ is a sequence of vertices of $D$ that contradicts Lemma~\ref{lema:noexistenciasucesion}.
	
\end{proof}

Let  $H$ be a digraph and $D$ an $H$-colored digraph. From now on $\{\xi _{1}, \xi_{2}\}$ will be a partition of $\xi$ and for every $i$ in $\{1,2\}$ $D_{i}$ will  denote the spanning subdigraph of $D$ such that  $A(D_{i})=\{a\in A(D): c(a)\in C_{j}$ for some $C_{j}$ in $\xi_{i}\}$. Notice that for every  $r$ in $\{1,2, \ldots , k\}$ $G_{r}$ is a subdigraph of either $D_{1}$ or $D_{2}$.

Let  $H$ be a digraph, $D$ an $H$-colored digraph and $S$ a subset of $V(D)$. We will say that $S$ is an \emph{ $H$-semikernel modulo  $D_{2}$} of $D$ if
\begin{enumerate}
\item $S$ is an $H$-independent set in $D$. 
\item For $z$ in $V(D)\setminus S$ if there exists a $Sz$-$H$-path contained in $D_{1}$, then there exists a  $zS$-$H$-path  in $D$.
 \end{enumerate}

\begin{lemma}\label{lema:existeciaHseminucleomodD2deD}
Let $H$ be a digraph and $D$ an $H$-colored digraph.  Suppose that
	\begin{enumerate}
		\item For every $i$ in $\{1, 2\}$ and for every cycle $\gamma$ contained in $D_{i}$ there exists $C_m$  in $\xi_{i}$ such that $\gamma$ is contained in $G_{m}$.
		\item For every $i$ in $\{1, 2\}$ and for every $H$-walk $P$ contained in $D_{i}$ there exists $C_{m'}$  in $\xi_{i}$ such that $P$ is contained in $G_{m'}$.
	\end{enumerate}  
	
	Then there exists $x_{0}$ in $V(D)$ such that $\{x_{0}\}$ is an $H$-semikernel modulo $D_{2}$ of $D$.
\end{lemma}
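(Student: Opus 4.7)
The natural approach is to mimic the proof of Lemma \ref{lema:existenciaHseminucleoD}, but with all forward $H$-paths confined to the subdigraph $D_1$. The payoff comes from the fact that hypotheses (1) and (2) of the current lemma, restricted to the index $i=1$, say exactly that $D_1$ with its color classes $\{C_m : C_m \in \xi_1\}$ satisfies the hypotheses of Lemma \ref{lema:unionhtray}. So I would argue by contradiction: assume that for every $x$ in $V(D)$, the singleton $\{x\}$ fails to be an $H$-semikernel modulo $D_2$. Since $\{x\}$ is trivially $H$-independent, the failure must lie in the absorbency condition, yielding some $y \neq x$ such that there is an $xy$-$H$-path contained in $D_1$ but no $yx$-$H$-path in $D$.

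Iterating this choice produces an infinite sequence $(x_0, x_1, x_2, \ldots)$ of vertices such that, for every $i$, there is an $x_ix_{i+1}$-$H$-path in $D_1$ and no $x_{i+1}x_i$-$H$-path in $D$. Using finiteness of $D$, pick the least $j_0$ with $x_{j_0}=x_{i_0}$ for some $i_0<j_0$; after reindexing, the block $(x_0, x_1, \ldots, x_{n-1})$ consists of pairwise distinct vertices, and for each $i$ (indices mod $n$) there is an $x_ix_{i+1}$-$H$-path $T_i$ contained in $D_1$ while no $x_{i+1}x_i$-$H$-path exists in $D$.

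Now I would invoke Lemma \ref{lema:unionhtray} applied to $D_1$: the cycles of $D_1$ live inside some $G_m$ with $C_m \in \xi_1$ by hypothesis (1), and the $H$-walks of $D_1$ live inside some $G_{m'}$ with $C_{m'} \in \xi_1$ by hypothesis (2); the transitivity by $H$-paths of each $G_m$ is already global. Thus there exists $C_m \in \xi_1$ with $\bigcup_{i=0}^{n-1} T_i$ contained in $G_m$, so in particular each $T_i$ is an $x_ix_{i+1}$-$H$-path inside $G_m$. Repeating the block we obtain the infinite sequence $(x_0, x_1, \ldots, x_{n-1}, x_0, x_1, \ldots)$ of vertices such that, for every $i$, there is an $x_ix_{i+1}$-$H$-path in $G_m$ but no $x_{i+1}x_i$-$H$-path in $G_m$ (any such path would exist in $D$, contradicting the construction). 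This directly contradicts Lemma \ref{lema:existenciaHseminucleogr}(1).

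The main obstacle I anticipate is purely bookkeeping rather than conceptual: one must carefully justify applying Lemma \ref{lema:unionhtray} to $D_1$ (rather than to $D$), which amounts to observing that the three global hypotheses used inside that lemma — transitivity by $H$-paths of each $G_m$, every cycle lying in some $G_m$, every $H$-walk lying in some $G_m$ — all remain valid when the ambient digraph is restricted to $D_1$ and the partition is restricted to $\xi_1$. Once this is clean, the contradiction with Lemma \ref{lema:existenciaHseminucleogr}(1) is immediate and the existence of the desired singleton $H$-semikernel modulo $D_2$ follows.
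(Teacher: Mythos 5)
Your proof is correct and takes essentially the same route as the paper: restrict attention to $D_{1}$ viewed as a digraph colored by $H[\bigcup_{C_{m}\in\xi_{1}}C_{m}]$ with partition $\xi_{1}$, and run the no-infinite-sequence machinery there to produce the singleton $H$-semikernel modulo $D_{2}$. The paper merely packages this by applying Lemma \ref{lema:existenciaHseminucleoD} to $D_{1}$ (handling $|\xi_{1}|=1$ separately via Lemma \ref{lema:existenciaHseminucleogr}), which is exactly the contradiction argument you inline; your uniform treatment is harmless since when $|\xi_{1}|=1$ the conclusion you need from Lemma \ref{lema:unionhtray} holds trivially because $A(D_{1})=A(G_{r})$.
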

\begin{proof} If  $\xi _{1}=\{C_{r}\}$ for some $r$ in $\{ 1, 2, \ldots , k\}$, then $A(G_{r}) = A(D_{1})$ (by definition of $D_1$). Then, we get from Lemma~\ref{lema:existenciaHseminucleogr} that there exists $x_{0}$ in $V(G_{r})$ such that $\{x_{0}\}$ is an $H$-semikernel of $G_{r}$. Therefore, it follows from the definition of $H$-semikernel modulo $D_{2}$ that $\{x_{0}\}$ is an $H$-semikernel modulo $D_{2}$ of $D$.
	
Suppose that $|\xi _{1}|\geq 2$ and let $H'$ be the digraph induced by $\bigcup \limits _{C_{m} \in \xi_{1}}C_{m}$ in $H$, that is $H'=H[\bigcup \limits _{C_{m} \in \xi_{1}}C_{m}]$. Notice that $D_1$ is an $H'$-colored digraph and $\xi_1$ is a partition of $V(H')$ such that for every $C_i$ in $\xi_1$ it holds that $G_{i}=D_{1}[\{a\in A(D_1) : c(a)\in C_{i}\}]=D[\{a\in A(D) : c(a)\in C_{i}\}]$  is transitive by $H'$-paths in $D_1$. Therefore, we get from hypotheses (1) and (2) and Lemma~\ref{lema:existenciaHseminucleoD} that there exists $x_{0}$  in $V(D_{1})=V(D)$ such that $\{x_{0}\}$ is an $H$-semikernel of $D_{1}$. Thus, it follows from the definition of $H$-semikernel modulo $D_{2}$ that $\{x_{0}\}$ is an $H$-semikernel modulo $D_{2}$ of $D$.
\end{proof}

Let $\mathcal{S} =\{S\subseteq V(D): S$ is a nonempty $H$-semikernel  modulo $D_{2}$ of $D\}$.

When $\mathcal{S} \neq \emptyset$ we define the digraph $D_{\mathcal{S}}$ as follows: $V(D_{\mathcal{S}})=\mathcal{S}$ and  for $S_{1}$ and $S_{2}$ in $\mathcal{S}$, with $S_{1}\neq S_{2}$, we have that $(S_{1}, S_{2})\in A(D_{\mathcal{S}})$ if and only if for every $s_{1}$ in $S_{1}$ there exists $s_{2}$ in $S_{2}$ such that either $s_{1}=s_{2}$ or there exists a $s_{1}s_{2}$-$H$-path contained in $D_{2}$ and there exists no a $s_{2}s_{1}$-$H$-path contained in $D$.

\begin{lemma}\label{lema:Dvarsigmaaciclica}
	Let $H$ be a digraph and $D$ an $H$-colored digraph.  Suppose that
	\begin{enumerate}
		\item For every $i$ in $\{1, 2\}$ and for every cycle $\gamma$ contained in $D_{i}$ there exists $C_m$  in $\xi_{i}$ such that $\gamma$ is contained in $G_{m}$.
		\item For every $i$ in $\{1, 2\}$ and for every $H$-walk $P$ contained in $D_{i}$ there exists $C_{m'}$  in $\xi_{i}$ such that $P$ is contained in $G_{m'}$.
	\end{enumerate}  
	
Then there exists the digraph $D_{\mathcal{S}}$ and it is is an acyclic digraph.
\end{lemma}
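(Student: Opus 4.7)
\emph{Proof sketch.} The plan is to first establish that $\mathcal{S}\neq\emptyset$, so that the digraph $D_{\mathcal{S}}$ is defined, and then to show it is acyclic by contradiction. For nonemptiness, I would apply Lemma~\ref{lema:existeciaHseminucleomodD2deD} directly, whose hypotheses coincide with (1) and (2) of the present statement: this yields $x_0\in V(D)$ with $\{x_0\}$ an $H$-semikernel modulo $D_2$ of $D$, so $\{x_0\}\in\mathcal{S}$.

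For acyclicity, assume towards a contradiction that $D_{\mathcal{S}}$ contains a cycle $(S_0,S_1,\ldots,S_{n-1},S_0)$. Since the $S_i$ are pairwise distinct subsets of $V(D)$, the chain of inclusions $S_0\subseteq S_1\subseteq\cdots\subseteq S_{n-1}\subseteq S_0$ cannot hold, and after a cyclic relabeling of the cycle I may assume $S_0\not\subseteq S_1$ and pick $s_0\in S_0\setminus S_1$. Using the arc definition of $D_{\mathcal{S}}$ recursively, I would build an infinite sequence $(s_j)_{j\geq 0}$ with $s_j\in S_{j\bmod n}$ such that every consecutive pair is either equal or joined by an $s_j s_{j+1}$-$H$-path contained in $D_2$ with no $s_{j+1}s_j$-$H$-path in $D$. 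The choice $s_0\notin S_1$ forces $s_0\neq s_1$, hence the first transition is of the non-trivial kind. Compressing the sequence by removing consecutive duplicates produces $(T_0,T_1,T_2,\ldots)$ with $T_0=s_0$, $T_1=s_1$, and every consecutive pair $(T_k,T_{k+1})$ linked by an $H$-path in $D_2\subseteq D$ with no reverse $H$-path in $D$.

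Now I would split into two cases. If some value repeats in the compressed sequence (which is automatic whenever this sequence is infinite, by finiteness of $V(D)$), take $a<b$ with $T_a=T_b$ and $b$ minimal, so $T_a,T_{a+1},\ldots,T_{b-1}$ are pairwise distinct. The cyclic sequence $(T_{a+1},T_{a+2},\ldots,T_{b-1},T_a)$ then consists of distinct vertices joined by consecutive $H$-paths in $D_2$, and Lemma~\ref{lema:unionhtray} applied inside $D_2$ with the partition $\xi_2$ (its hypotheses follow from (1) and (2) restricted to the index $i=2$) gives a class $C_j\in\xi_2$ with all those $H$-paths contained in $G_j$. Lemma~\ref{lema:indtray} then yields a $T_{a+1}T_a$-$H$-path in $G_j\subseteq D$, contradicting the defining property of the transition $T_a\to T_{a+1}$. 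Otherwise the compressed sequence is finite, say $(T_0,\ldots,T_M)$, with all terms pairwise distinct; then the original sequence is eventually constant at $v=T_M$, so $s_j=v$ for every $j\geq j_M$, which forces $v\in\bigcap_{i=0}^{n-1}S_i$ because $j\bmod n$ takes every residue as $j$ grows. Setting $\ell=(j_M-1)\bmod n$, the vertex $T_{M-1}=s_{j_M-1}$ lies in $S_\ell$, and so does $v$; since $T_{M-1}\neq v$ and $S_\ell$ is $H$-independent, there is no $T_{M-1}v$-$H$-path in $D$, contradicting the $H$-path in $D_2\subseteq D$ supplied by the last non-trivial transition.

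The main obstacle is the second subcase, which is not closed out by a direct application of Lemma~\ref{lema:noexistenciasucesion}; the resolution is to observe that the eventual constant value of the sequence must belong to every $S_i$ of the cycle and then to exploit the $H$-independence of the $H$-semikernels modulo $D_2$ to contradict the existence of the last $H$-path in $D_2$.
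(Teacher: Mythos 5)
Your proposal is correct, and it reaches the conclusion by a noticeably different mechanism in the middle of the argument. The paper, after producing $s_0\in S_{i_0}\setminus S_{i_0+1}$ (your Claim-1 step is identical), proves an auxiliary claim: whenever a $zw$-$H$-path in $D$ runs from $S_{l_0}$ into $S_{l_0+1}$, the vertex $w$ itself sits in some $S_{j_0}$ with $w\notin S_{j_0+1}$. Iterating this claim, every step of the constructed sequence is a genuine transition (an $H$-path in $D_2$ with no reverse $H$-path in $D$), so the paper gets an infinite sequence of such transitions and concludes at once by citing Lemma~\ref{lema:noexistenciasucesion} (for $|\xi_2|\geq 2$) or Lemma~\ref{lema:existenciaHseminucleogr} (for $|\xi_2|=1$). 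You instead walk around the cycle index by index, which allows stationary steps; this buys you a simpler construction (no analogue of the paper's Claim~2) at the price of the extra case in which the sequence stabilizes. Your resolution of that case is correct and is the genuinely new ingredient: eventual constancy forces the limit vertex into every $S_i$, and then the $H$-independence of $S_\ell$ contradicts the forward $H$-path of the last nontrivial transition. Your repeated-value case essentially inlines the proof of Lemma~\ref{lema:noexistenciasucesion} restricted to $D_2$ (Lemma~\ref{lema:unionhtray} plus Lemma~\ref{lema:indtray}), which is fine; just note, as the paper does explicitly, that applying Lemma~\ref{lema:unionhtray} ``inside $D_2$ with the partition $\xi_2$'' means viewing $D_2$ as colored by $H[\bigcup_{C_m\in\xi_2}C_m]$, and that when $|\xi_2|=1$ this framework degenerates but the needed conclusion (all the $H$-paths lie in the single $G_r$ with $A(G_r)=A(D_2)$) holds trivially. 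With that small remark added, your argument is complete.
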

\begin{proof} It follows from Lemma~\ref{lema:existeciaHseminucleomodD2deD} that $D$ has a nonempty $H$-semikernel modulo $D_{2}$, which implies that $\mathcal{S} \neq \emptyset$ and with this  we can consider the digraph $D_{\mathcal{S}}$.
	
	Proceeding by contradiction, suppose that $D_{\mathcal{S}}$ contains a cycle, say $C=(S_{0}, S_{1}, \ldots , S_{n-1}, S_{0})$, with $n\geq 2$. 
	
	\textbf{Claim 1.} There exists $i_{0}$ in $\{0, 1, \ldots , n-1\}$ such that for some $z$ in $S_{i_{0}}$ we have that $z\notin S_{i_{0}+1}$ (indices  modulo $n$).
	
	Proceeding by contradiction, suppose that for each $i$ in $\{0, 1, \ldots , n-1\}$  and for each $z$ in $S_{i}$ we have that $z\in S_{i+1}$, which implies that $S_{0}\subseteq S_{1}\subseteq S_{2}\subseteq \ldots \subseteq S_{n-1}\subseteq S_{0}$ and therefore $S_{i}=S_{j}$ for every subset $\{i, j\}$ of $\{0, 1, \ldots , n-1\}$, with $i\neq j$, which contradicts that the length of $C$ is at least two.
	
	\textbf{Claim 2.} Let $l_{0}$ be an index in $\{0, 1, \ldots , n-1\}$. If for some $z$ in $S_{l_{0}}$ and for some $w$ in $S_{l_{0}+1}$ we have that there exists a $zw$-$H$-path in $D$, then there exists $j_{0}$ in $\{0, 1, \ldots, n-1\}\setminus \{l_{0}\}$ such that $w\in S_{j_{0}}$ and $w\notin S_{j_{0}+1}$ (indices modulo $n$).
	
	Let $z$ and $w$ two vertices as in claim 2. Suppose without loss of generality that $l_{0}=0$. Observe that $w$ $\notin$ $S_{n}=S_{0}$ because there exists a $zw$-$H$-path in $D$, $\{z, w\}$ $\subseteq$ $S_{0}$ and $S_{0}$ is an $H$-independent set in $D$. Since $w$ $\in$ $S_{1}$, then we can consider $j_{0}=$max$\{i\in \{1, \ldots , n-1\}:w\in S_{i}\}$. Therefore, $w\in S_{j_{0}}$ and $w\notin S_{j_{0}+1}$ by choice of $j_{0}$ (indices modulo $n$).
	
It follows from Claim 1 that there exist $i_{0}$ in $\{0, 1, \ldots , n-1\}$ and $t_{0}$ in $S_{i_{0}}$ such that $t_{0}\notin S_{i_{0}+1}$. Since $(S_{i_{0}}, S_{i_{0}+1}) \in A(D_{\mathcal{S}})$ we get that there exists $t_{1}$ in $S_{i_{0}+1}$ such that there is a $t_{0}t_{1}$-$H$-path contained in $D_{2}$ and there is no a $t_{1}t_{0}$-$H$-path contained in $D$. It follows from Claim 2 that there exists $i_{1}$ in $\{0, 1, \ldots , n-1\}$ such that $t_{1}\in S_{i_{1}}$ and $t_{1}\notin S_{i_{1}+1}$. Since $(S_{i_{1}}, S_{i_{1}+1})\in A(D_{\mathcal{S}})$ we get that there exists $t_{2}$ in $S_{i_{1}+1}$ such that there is a $t_{1}t_{2}$-$H$-path contained in $D_{2}$ and there is no a $t_{2}t_{1}$-$H$-path contained in $D$.

Once chosen $t_{0}, t_{1}, \ldots , t_{m}$ we get from Claim 2 that there exists an index $i_{m}$ in $\{0, 1, \ldots , n-1\}$ such that $t_{m}\in S_{i_{m}}$ and $t_{m}\notin S_{i_{m}+1}$.  Since $(S_{i_{m}}, S_{i_{m}+1})\in A(D_{\mathcal{S}})$ it follows that there exists $t_{m+1}$ in $S_{i_{m}+1}$ such that there is a $t_{m}t_{m+1}$-$H$-path contained in $D_{2}$ and there is no a $t_{m+1}t_{m}$-$H$-path contained in $D$. Thus, we obtain a sequence of vertices $(t_{0}, t_{1}, t_{2}, \ldots)$ such that for every $i$ in $\{0,1, \ldots\}$ there exists a $t_{i}t_{i+1}$-$H$-path contained in $D_{2}$ and there is no a $t_{i+1}t_{i}$-$H$-path in $D$, a contradiction with Lemma~\ref{lema:noexistenciasucesion}, when $|\xi _{2}|\geq 2$. If $|\xi _{2}|=1$, suppose that $\xi _{2}=\{C_{r}\}$, then $A(G_{r}) = A(D_{2})$ and  $(t_{0}, t_{1}, t_{2}, \ldots)$ is a sequence of vertices of $G_{r}$ such that for each $i$ in $\{0,1, \ldots\}$ there exists a $t_{i}t_{i+1}$-$H$-path contained in $G_{r}$ and there exists no a $t_{i+1}t_{i}$-$H$-path in $D$, a contradiction with Lemma~\ref{lema:existenciaHseminucleogr}.
	
Therefore, $D_{\mathcal{S}}$ is an acyclic digraph. 
\end{proof}

\begin{lemma}
\label{lema:cadahcaminoenD1oD2}
Let $H$ be a digraph and $D$ an $H$-colored digraph.  Suppose that
	\begin{enumerate}
		\item  For each $i$ in $\{1, 2\}$ and for each $H$-walk $P$  contained in $D_{i}$ there exists $m'$ in $\{ 1, \ldots , k\}$ such that $P$ is contained in $G_{m'}$.
		\item If either there exists a $\xi_{1}\xi_{2}$-arc or there exists a $\xi_{2}\xi_{1}$-arc in $A(\mathscr{C}_{C}(D))$, say $(a,b)$, then $(a,b) \notin A(H)$.
	\end{enumerate}
	
Then every $H$-walk of $D$ is contained in either $D_{1}$ or in $D_{2}$. Moreover, for each $H$-walk $T$ of $D$ there exists $l$ in $\{1, \ldots , k\}$ such that $T$ is contained in $G_{l}$.
\end{lemma}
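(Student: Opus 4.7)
The plan is to argue by contradiction. Suppose some $H$-walk $T=(v_{0},v_{1},\ldots,v_{n})$ of $D$ is contained in neither $D_{1}$ nor $D_{2}$. Because $\{\xi_{1},\xi_{2}\}$ is a partition of $\xi$ and each color of $D$ lies in exactly one class $C_{r}\in\xi$, the set $\{A(D_{1}),A(D_{2})\}$ is a partition of $A(D)$; thus every arc of $T$ belongs to exactly one of $D_{1}$, $D_{2}$, and under the assumption both sides are nonempty. A simple induction on the position of the first arc belonging to each side shows that there must exist an index $i\in\{0,\ldots,n-2\}$ such that the two consecutive arcs $(v_{i},v_{i+1})$ and $(v_{i+1},v_{i+2})$ lie in different $D_{j}$'s.

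Set $a=c(v_{i},v_{i+1})$ and $b=c(v_{i+1},v_{i+2})$, and assume without loss of generality that $a\in C_{s}\in\xi_{1}$ and $b\in C_{t}\in\xi_{2}$. By the very definition of the color-class digraph, the pair of consecutive arcs $(v_{i},v_{i+1}),(v_{i+1},v_{i+2})$ witnesses $(a,b)\in A(\mathscr{C}_{C}(D))$; since $a\in C_{s}\in\xi_{1}$ and $b\in C_{t}\in\xi_{2}$, this is a $\xi_{1}\xi_{2}$-arc. Hypothesis (2) then yields $(a,b)\notin A(H)$. However, $T$ being an $H$-walk means that $(c(v_{0},v_{1}),\ldots,c(v_{n-1},v_{n}))$ is a walk in $H$, so every pair of consecutive colors is an arc of $H$; in particular $(a,b)\in A(H)$, a contradiction.

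Hence every $H$-walk of $D$ is contained entirely in $D_{1}$ or entirely in $D_{2}$. Applying hypothesis (1) to the $D_{i}$ that contains $T$, we obtain an index $m'\in\{1,\ldots,k\}$ with $T$ contained in $G_{m'}$ (automatically $C_{m'}\in\xi_{i}$, since $G_{m'}\subseteq D_{i}$), which is the moreover part. The only subtle point is the first paragraph's observation that having arcs on both sides forces a color transition between two consecutive arcs of $T$; this is immediate from the partition $\{A(D_{1}),A(D_{2})\}$ of $A(D)$, so the whole argument reduces to one careful application of hypothesis (2).
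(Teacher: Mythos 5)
Your proof is correct and follows essentially the same route as the paper: locate two consecutive arcs of the walk lying in different $D_{i}$'s (the paper does this via the minimal index of a crossing arc), observe that the corresponding pair of colors is a $\xi_{1}\xi_{2}$- or $\xi_{2}\xi_{1}$-arc of $\mathscr{C}_{C}(D)$ that must belong to $A(H)$ because $T$ is an $H$-walk, contradicting hypothesis (2), and then invoke hypothesis (1) for the moreover part. No gaps.
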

\begin{proof}
Let $T=(v_{0},v_{1}, \ldots ,v_{n})$ be an $H$-walk in $D$. Proceeding by contradiction, suppose that $T$ is not contained in neither $D_{1}$ nor $D_{2}$. 
	
If $(v_{0},v_{1}) \in A(D_{1})$, then consider $j=$min$\{i\in\{0,1, \ldots ,n-1\}:(v_{i},v_{i+1})\in A(D_{2})\}$, $\{i\in\{0,1, \ldots ,n-1\}:(v_{i},v_{i+1})\in A(D_{2})\}\neq \emptyset$ because $T$ is not contained in $D_{1}$. It follows from the choice of $j$ that $(v_{j-1},v_{j})\in A(D_{1})$, which implies that $(c(v_{j-1},v_{j}),c(v_{j},v_{j+1}))$ is a $\xi _{1}\xi_{2}$-arc in $A(\mathscr{C}_{C}(D))$. On the other hand, since $T$ is an $H$-walk we get that $(c(v_{j-1},v_{j}),c(v_{j},v_{j+1}))$ $\in$ $A(H)$ which contradicts hypothesis (2).
	
If $(v_{0},v_{1})\in A(D_{2})$, then consider $j=$min$\{i\in\{0,1, \ldots ,n-1\}:(v_{i},v_{i+1})\in A(D_{1})\}$. We get from the choice of $j$ that $(v_{j-1},v_{j})\in A(D_{2})$, which implies that $(c(v_{j-1},v_{j}),c(v_{j},v_{j+1}))$ is a $\xi _{2}\xi_{1}$-arc in $A(\mathscr{C}_{C}(D))$, a contradiction with hypothesis (2) because $T$ is an $H$-walk.

	Therefore, every $H$-walk of $D$ is contained in either $D_{1}$ or in $D_{2}$. On the other hand, it follows from hypothesis (1) that for each $H$-walk $T$ of $D$ there exists $l$ in $\{1, \ldots , k\}$ such that $T$ is contained in $G_{l}$.
\end{proof}

\begin{lemma}\label{lema:existenciaHsubdivision}
	
Let $H$ be a digraph, $D$ an $H$-colored digraph and \{$u$, $z$, $x$, $w$\} a subset of V($D$).  Suppose that
	\begin{enumerate}
		\item For every $i$ in $\{1, 2\}$ and for every cycle $\gamma$ contained in $D_{i}$ there exists $C_m$  in $\xi_{i}$ such that $\gamma$ is contained in $G_{m}$.
		\item For every $i$ in $\{1, 2\}$ and for every $H$-walk $P$ contained in $D_{i}$ there exists $C_{m'}$  in $\xi_{i}$ such that $P$ is contained in $G_{m'}$.
		\item There exists a $uz$-$H$-path contained in $D_{1}$, say $\alpha_{1}$,  there exists a $zw$-$H$-path contained in $D$, say $\alpha_{2}$, and there exists a $wx$-$H$-path contained in $D_{2}$,  say $\alpha_{3}$, where there are $H$-obstructions on $z$ and $w$ with respect to  $\alpha _{1}\cup \alpha _{2}\cup \alpha_{3}$ ($u$ can be $x$).
         \item If either there exists a $\xi_{1}\xi_{2}$-arc or there exists a $\xi_{2}\xi_{1}$-arc in $A(\mathscr{C}_{C}(D))$, say $(a,b)$, then $(a,b) \notin A(H)$.
	\end{enumerate}

If there exist no $uw$-$H$-paths in $D$, there exist no $zx$-$H$-paths in $D$, there exist no $zu$-$H$-paths in $D$, then either there exists a $ux$-path which is a $(\xi _{1}, \xi, \xi_{2})$-$H$-subdivision of $\overrightarrow{P_{3}}$ or there exists a $(\xi_{1}, \xi, \xi_{2})$-$H$-subdivision of $\overrightarrow{C_{3}}$.
\end{lemma}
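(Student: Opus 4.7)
The plan is to analyze the walk $W = \alpha_1 \cup \alpha_2 \cup \alpha_3$ from $u$ to $x$ and show that either $W$ itself (possibly after shortening) produces the required subdivision, or else an intersection among the three $H$-paths generates one of the forbidden $H$-paths ($uw$-, $zx$-, or $zu$-$H$-path), contradicting the hypotheses. A useful preliminary, which I would establish first, is that every $H$-walk in $D$ yields an $H$-path with the same endpoints: by hypotheses (2) and (4) the premises of Lemma~\ref{lema:cadahcaminoenD1oD2} hold, so every $H$-walk lies in some $G_l$, and since $G_l$ is transitive by $H$-paths in $D$, the walk can be upgraded to an $H$-path by iterated shortcuts. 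Hence the three non-existence hypotheses are equivalent to the non-existence of $H$-\emph{walks} between the corresponding endpoints, which is the version I will actually use.

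I would then run an induction on $l(\alpha_1)+l(\alpha_2)+l(\alpha_3)$. For the inductive step, suppose first that some $y\in V(\alpha_1)\cap V(\alpha_2)$ is distinct from $z$. The walk $(u,\alpha_1,y,\alpha_2,w)$ must have an $H$-obstruction at $y$, otherwise it would be an $H$-walk and, by the preliminary, a forbidden $uw$-$H$-path. The truncated triple $((u,\alpha_1,y),(y,\alpha_2,w),\alpha_3)$ is strictly shorter and still satisfies every hypothesis of the lemma (the obstructions at $y$ and at $w$ are in place, and no forbidden $H$-path has been produced), so the induction hypothesis applies. The case $y\in V(\alpha_2)\cap V(\alpha_3)\setminus\{w\}$ is symmetric, the obstruction at $y$ being forced by the non-existence of a $zx$-$H$-path.

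The hard case is $y\in V(\alpha_1)\cap V(\alpha_3)$. First, the walk $(u,\alpha_1,y,\alpha_3,x)$ has arcs of both $D_1$ and $D_2$, so Lemma~\ref{lema:cadahcaminoenD1oD2} forbids it from being an $H$-walk; hence there is an $H$-obstruction at $y$. Depending on the position of $y$ on each path, one then splices an $H$-walk of a forbidden form. For instance, if $y$ is interior to $\alpha_3$ and distinct from $u$, the $H$-path $(y,\alpha_3,x)\subseteq D_2$ combined with suitable portions of $\alpha_1$ and $\alpha_2$ yields either a $zx$- or a $uw$-$H$-walk; if $y=u$, the relation $u\in V(\alpha_3)$ together with the obstruction at $w$ is leveraged to construct a $zu$-$H$-walk, again contradicting a hypothesis. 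The upshot is that $V(\alpha_1)\cap V(\alpha_3)\subseteq\{u,x\}$, with a common vertex allowed only in the case $u=x$.

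In the base case, $\alpha_1,\alpha_2,\alpha_3$ are pairwise internally disjoint and share only $u=x$ if at all. If $u\neq x$, then $W$ is a path with $H$-obstructions exactly at $z$ and $w$, i.e.\ a $(\xi_1,\xi,\xi_2)$-$H$-subdivision of $\overrightarrow{P_3}$. If $u=x$, then $W$ is a cycle with obstructions at $z$ and $w$; moreover, the last arc of $\alpha_3$ (in $D_2$, colored by some $C\in\xi_2$) and the first arc of $\alpha_1$ (in $D_1$, colored by some $C\in\xi_1$) meet at $u=x$, so their color pair is a $\xi_2\xi_1$-arc of $\mathscr{C}_C(D)$, which by hypothesis (4) is not in $A(H)$. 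This supplies the third $H$-obstruction and exhibits $W$ as a $(\xi_1,\xi,\xi_2)$-$H$-subdivision of $\overrightarrow{C_3}$. The main obstacle is the analysis of $V(\alpha_1)\cap V(\alpha_3)\neq\emptyset$, whose several positional sub-cases each require producing the specific forbidden $H$-walk, making careful bookkeeping essential.
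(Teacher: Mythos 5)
Your preliminary reduction (every $H$-walk upgrades, via Lemma~\ref{lema:cadahcaminoenD1oD2} and transitivity by $H$-paths, to an $H$-path with the same endpoints) is sound and is in the spirit of what the paper does, but the core of your argument breaks down in the case $V(\alpha_{1})\cap V(\alpha_{3})\neq\emptyset$. You claim that a crossing vertex $y$ of $\alpha_{1}$ and $\alpha_{3}$ other than a common endpoint $u=x$ always lets you splice a forbidden $uw$-, $zx$- or $zu$-$H$-walk, so that $V(\alpha_{1})\cap V(\alpha_{3})\subseteq\{u,x\}$. This is false: an internal crossing of $\alpha_{1}$ and $\alpha_{3}$ is precisely the configuration that produces the $(\xi_{1},\xi,\xi_{2})$-$H$-subdivision of $\overrightarrow{C_{3}}$, and no contradiction is available. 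For instance, take $\alpha_{1}=(u,y,z)$ with both arcs in $D_{1}$, $\alpha_{2}=(z,w)$, and $\alpha_{3}=(w,y,x)$ with both arcs in $D_{2}$, with $H$-obstructions on $z$ and $w$ (an obstruction on $y$ is automatic by your Remark~1(5)-type observation); such a digraph can satisfy all the hypotheses with no $uw$-, $zx$- or $zu$-$H$-path, and here $(y,z)\cup(z,w)\cup(w,y)$ is a $(\xi_{1},\xi,\xi_{2})$-$H$-subdivision of $\overrightarrow{C_{3}}$ even though $u\neq x$. Because your proof only ever outputs a $\overrightarrow{C_{3}}$-subdivision in the base case $u=x$, it misses the main source of that alternative; the paper's subcases (1.2), (I.4), (2.2) and (2.4) handle exactly these crossings by choosing extremal intersection vertices and exhibiting, e.g., $(y,\alpha_{1},z)\cup\alpha_{2}\cup(w,\alpha_{3},y)$ as the required subdivision.

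There is a second gap in the inductive step itself. When you truncate at $y\in V(\alpha_{1})\cap V(\alpha_{2})\setminus\{z\}$ and invoke the induction hypothesis for the triple $(u,y,w,x)$, the statement you are inducting on also demands that there exist no $yx$- and no $yu$-$H$-paths (and, in the symmetric case $y\in V(\alpha_{2})\cap V(\alpha_{3})\setminus\{w\}$, no $uy$-$H$-paths); nothing in your setup provides these, and they do not follow from the original non-existence hypotheses, so ``still satisfies every hypothesis of the lemma'' is unjustified. The paper avoids any recursion: after locating $\alpha_{2}$ in $D_{1}$ or $D_{2}$ via Lemma~\ref{lema:cadahcaminoenD1oD2}, it uses hypothesis (1) (a cycle inside $D_{i}$ lies in a single $G_{m}$) together with transitivity by $H$-paths to show outright that $V(\alpha_{1})\cap V(\alpha_{2})=\{z\}$ (respectively $V(\alpha_{2})\cap V(\alpha_{3})=\{w\}$), and then builds the $\overrightarrow{P_{3}}$- or $\overrightarrow{C_{3}}$-subdivision directly from first/last crossing vertices. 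Note that your proposal never uses hypothesis (1), which is a further sign that the intersections are not being controlled; as written, the proposal does not prove the lemma.
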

\begin{proof}

Consider the following remarks which will be useful in the proof of Lemma \ref{lema:existenciaHsubdivision}.
	
	{\bf Remark 1.}
	\begin{enumerate}
		\item[(1)] $u \notin V(\alpha_{2})$, otherwise $(u, \alpha_{2}, w)$ is a $uw$-$H$-path, a contradiction.
		\item[(2)]  $z \notin V(\alpha_{3})$, otherwise$(z,\alpha_{3}, x)$ is a $zx$-$H$-path, a contradiction.
		\item[(3)]  $w \notin V(\alpha_{1})$, otherwise $(u, \alpha_{1}, w)$ is a $uw$-$H$-path, a contradiction.
		\item[(4)]   $x \notin V(\alpha_{2})$, otherwise $(z, \alpha_{2}, x)$ is a $zx$-$H$-path, a contradiction.
		\item[(5)]  If $\beta_{1}$ is an $ab$-$H$-walk contained in $D_{i}$ and  $\beta_{2}$ is a $bc$-$H$-walk contained in $D_{j}$, with $\{i,j\}\subseteq \{1,2\}$, $i\neq j$, then there is an $H$-obstruction on $b$, with respect to $\beta_{1}\cup \beta_{2}$, otherwise $\beta_{1}\cup \beta_{2}$ is an $H$-walk which is no contained in either $D_{1}$ or in $D_{2}$, a contradiction with Lemma~\ref{lema:cadahcaminoenD1oD2}.
	\end{enumerate}
	
	{\bf Remark 2.}  $u$, $z$ and $w$ are three different vertices and $x$ $\notin$ \{$z$, $w$\}.
	
	 It follows from Remark 1.
	
	By Lemma~\ref{lema:cadahcaminoenD1oD2}, we will consider two cases on $\alpha_{2}$.
	
	\textbf{Case 1.} $\alpha_{2}$ is contained in $D_{1}$.

	\textbf{Claim 1.} $V(\alpha_{1}) \cap V(\alpha_{2}) = \{z\}$.
	
	Proceeding by contradiction, suppose that $((V(\alpha_{1})\cap V(\alpha_{2})) - \{z\}) \neq \emptyset$. Then $\alpha_{1} \cup \alpha_{2} $ contains a cycle, say $\gamma$, which has arcs of both  $\alpha_{1}$ and $\alpha_{2}$. Since $\alpha_{1}$ and $\alpha_{2}$ are  contained in $D_1$ we get that  $\gamma$ is contained in $D_{1}$. On the other hand, it follows from hypothesis 1 that there exists $j$ in $\{1, 2, \ldots ,k\}$ such that $\gamma$  is contained in $G_{j}$, which implies that $\alpha_{1}$ and $\alpha _{2}$ are contained in $G_{j}$. Since $G_{j}$ is transitive by $H$-paths we get that there exists a $uw$-$H$-path contained in $G_{i}$, a contradiction with the fact that there exists no $uw$-$H$-path. 
	
	\textbf{Subcase 1.1.} $V(\alpha_{2}) \cap V(\alpha_{3}) = \{w\}$.

	 In this subcase we  consider two subcases.
	
	\textbf{(1.1.)} $V(\alpha_{1}) \cap V(\alpha_{3}) = \emptyset$. 
	
	Notice that by  supposition of this subcase we get  $u\neq x$. Then it follows from Claim 1, the suppossition of the Subcase 1.1 and by hypothesis 3 that  $\alpha_{1}\cup \alpha_{2}\cup\alpha_{3}$ is a $ux$-path which is a $(\xi_{1}, \xi, \xi_{2})$-$H$-subdivision of $\overrightarrow{P_{3}}$.

	\textbf{(1.2)} $V(\alpha_{1}) \cap V(\alpha_{3}) \neq \emptyset$.
	
	Let $y$ be the last vertex in $\alpha_{1}$ which appears in $\alpha_{3}$. Notice that from Remark 1(2) and  Remark 1(3) we get that $y \notin \{z, w\}$.  Since $(y, \alpha_{1},z)$ is contained in $D_1$, $\alpha_{2}$ is contained in $D$, $(w, \alpha_{3}, y)$ is contained in $D_2$, by hypothesis 3 there exist $H$-obstructions on $z$ and $w$, with respect to $(y, \alpha_{1},z) \cup \alpha_{2} \cup (w, \alpha_{3}, y)$,  by Remark 1(5) there exists an $H$-obstruction on $y$, with respect to $(y, \alpha_{1},z) \cup \alpha_{2} \cup (w, \alpha_{3}, y)$,  then it follows from Claim 1, the supposition of Subcase 1.1 and by choice of $y$ that $(y, \alpha_{1},z) \cup \alpha_{2} \cup (w, \alpha_{3}, y)$ is a $(\xi_{1}, \xi, \xi_{2})$-$H$-subdivision of $\overrightarrow{C_{3}}$.

\textbf{Subcase 1.2.} $(V(\alpha_{2}) \cap V(\alpha_{3}) - \{w\}) \neq \emptyset$. 

In this subcase we consider two subcases. 
	
	\textbf{(I.3)} $V(\alpha_{1}) \cap V(\alpha_{3}) = \emptyset$.
	
Let $y$ be the first vertex in $\alpha_{2}$ which appears in $\alpha_{3}$. Notice that it follows from Remark 1(2) and Remark 1(4) that $y \notin \{z, x\}$; and by  supposition $V(\alpha_{1}) \cap V(\alpha_{3}) = \emptyset$ we get that $u\neq x$. Since $\alpha_1$ is contained in $D_1$, $(z, \alpha_{2}, y)$ is contained in $D$,  $(y, \alpha_{3}, x)$ is contained in $D_2$,  by hypothesis 3 there exists an $H$-obstruction on $z$, with respect to $\alpha_1 \cup (z, \alpha_{2},y)\cup (y, \alpha_{3}, x)$, by Remark 1(5) there exists an $H$-obstruction on $y$, with respect to $\alpha_1 \cup (z, \alpha_{2},y )\cup (y, \alpha_{3}, x)$, then it follows from Claim 1 and the choice of $y$ that $\alpha_1 \cup (z, \alpha_{2},y )\cup (y, \alpha_{3}, x)$ is an $ux$-path which is a $(\xi_{1}, \xi, \xi_{2})$-$H$-subdivision of $\overrightarrow{P_{3}}$.

\textbf{(I.4)} $V(\alpha_{1}) \cap V(\alpha_{3}) \neq \emptyset$.

Let $y$ be the first vertex in $\alpha_{3}$ which appears in $\alpha_{1} \cup \alpha_{2}$ and let $e$ be the last vertex in $\alpha_{3}$ which appears in $\alpha_{1}\cup \alpha_{2}$.

If $y\in V(\alpha_{1})$, then we get from Remark 1(2) and Remark 1(3)  that $y \notin \{z, w\}$. Since $(y, \alpha_{1}, z)$ is contained in $D_1$, $\alpha_{2}$ is contained in $D$, $(w, \alpha_{3}, y)$ is contained in $D_2$,  by hypothesis 3 there exist $H$-obstructions on $z$ and $w$, with respect to $(y, \alpha_{1},z) \cup \alpha_{2} \cup (w, \alpha_{3}, y)$, by Remark 1(5) there exists an $H$-obstruction on $y$, with respect to $(y, \alpha_{1},z) \cup \alpha_{2} \cup (w, \alpha_{3}, y)$, then  $(y, \alpha_{1},z) \cup \alpha_{2} \cup (w, \alpha_{3}, y)$ is a $(\xi_{1}, \xi, \xi_{2})$-$H$-subdivision of $\overrightarrow{C_{3}}$.

If $y\in V(\alpha_{2})$ and $e\in V(\alpha_{1})$, we get that $y \neq e$. Let $a$ be the last vertex in $\alpha_{3}$ which appears in $\alpha_{2}$ (there exists $a$ because $y\in V(\alpha_{2})$), and let $b$ be the first vertex in $(a, \alpha_{3}, x)$ which appears in $\alpha_{1}$, (there exists $b$ because $e\in V(\alpha_{1})$ and $e\in V((a, \alpha_{3},x))$). Notice that it follows from the choice of $a$, Remark 1(1), Remark 1(2) and Remark 1(4) that $a$ $\notin$ \{$u$, $z$, $x$\}, also it follows from the choice of  $b$, Remark 1(2) and Remark 1(3) that $b$ $\notin$ \{$z$, $w$\}. So, we get from Claim 1 and the fact $z$ $\notin$ \{$a$, $b$\} that $a \neq b$.  Since $(b, \alpha_{1} , z)$ is contained in $D_1$, $(z, \alpha_{2}, a)$ is contained in $D$, $(a, \alpha_{3}, b)$ is contained in $D_2$, by hypothesis 3 there exists an $H$-obstruction on $z$, with respect to $(b, \alpha_{1} , z) \cup (z, \alpha_{2}, a) \cup (a, \alpha_{3}, b)$, by Remark 1(5) there exist $H$-obstruction on $a$ and $b$, with respect to $(b, \alpha_{1} , z) \cup (z, \alpha_{2}, a) \cup (a, \alpha_{3}, b)$,  then $(b, \alpha_{1} , z) \cup (z, \alpha_{2}, a) \cup (a, \alpha_{3}, b)$ is a $(\xi_{1}, \xi, \xi_{2})$-$H$-subdivision of $\overrightarrow{C_{3}}$.

If $y\in V(\alpha_{2})$ and $e \in V(\alpha_{2})$, then it follows from Remark 1(2) and Remark 1(4) that  $e \notin \{z, x\}$, which implies by  choice of $e$ that $V(\alpha_{1})\cap V((e,\alpha_{3},x))=\emptyset$. Since $\alpha_{1}$ is contained in $D_1$, $(z, \alpha_{2}, e)$ is contained in $D$, $(e, \alpha_{3}, x)$ is contained in $D_2$,  by hypothesis 3 there exists an $H$-obstruction on $z$, by Remark 1(5) there exists an $H$-obstruction on $e$, then  $\alpha_{1} \cup (z, \alpha_{2}, e) \cup (e, \alpha_{3}, x)$ is a $ux$-path which is a $(\xi_{1}, \xi, \xi_{2})$-$H$-subdivision of $\overrightarrow{P_{3}}$.

	\textbf{Case 2.} $\alpha_{2}$ is contained in $D_{2}$.

	\textbf{Claim 2.} $V(\alpha_{2}) \cap V(\alpha_{3}) =\{w\}$.
	
Proceeding by contradiction, suppose that $(V(\alpha_{2})\cap V(\alpha_{3})-\{w\}) \neq \emptyset$. Then $\alpha_{2} \cup \alpha_{3}$ is a walk in $D_{2}$ which contains a cycle $\gamma$. By hypothesis 1 there exists $C_i$ in $\xi_2$, for some $i$ in $\{1, \ldots, k\}$, such that $\gamma$ is contained in $G_{i}$, that implies that $\alpha _{2}\cup \alpha _{3}$ is contained in $G_{i}$ (by hypothesis 2)  and since $G_{i}$ is transitive by $H$-paths then there exists a $zx$-$H$-path, a contradiction. 
	
\textbf{Subcase 2.1.} $V(\alpha_{1}) \cap V(\alpha_{2}) = \{z\}$.
	
In this subcase consider the following subcases.

	\textbf{(2.1.)} $V(\alpha_{1}) \cap V(\alpha_{3}) = \emptyset$. 
	
	Notice that  $u\neq x$ because $V(\alpha_{1}) \cap V(\alpha_{3}) = \emptyset$. Then $\alpha_{1} \cup \alpha_{2} \cup \alpha_{3}$ is a $ux$-path which is a $(\xi_{1}, \xi, \xi_{2})$-$H$-subdivision of $\overrightarrow{P_{3}}$, because by hypothesis 3 there exist $H$-obstructions on $z$ and $w$ with respect to $\alpha_{1} \cup \alpha_{2} \cup \alpha_{3}$.

	\textbf{(2.2.)} $V(\alpha_{1}) \cap V(\alpha_{3}) \neq \emptyset$. 
	
	Let $y$ be the last vertex in $\alpha_{1}$ which appears in $\alpha_{3}$. We get from Remark 1(2) and Remark 1(3) that $y \notin \{z, w\}$. Since $(y, \alpha_{1}, z)$ is contained in $D_1$, $\alpha_2$ is contained in $D$, $(w, \alpha_{3}, y)$ is contained in $D_2$, by hypothesis 3   there exist $H$-obstructions on $z$ and $w$, with respect to $(y, \alpha_{1}, z) \cup \alpha_{2} \cup (w, \alpha_{3}, y)$, by Remark 1(5) there exists an $H$-obstruction on $y$, with respect to $(y, \alpha_{1}, z) \cup \alpha_{2} \cup (w, \alpha_{3}, y)$, then it follows that $(y, \alpha_{1}, z) \cup \alpha_{2} \cup (w, \alpha_{3}, y)$ is a $(\xi_{1}, \xi, \xi_{2})$-$H$-subdivision of $\overrightarrow{C_{3}}$.
	
\textbf{Subcase 2.2.} $(V(\alpha_{1})\cap V(\alpha_{2}) - \{z\}) \neq \emptyset$.

In this subcase consider the following subcases.

\textbf{(2.3.)} $(V (\alpha_{1})\cap V(\alpha_{3})) = \emptyset$.
	
	Let $y$ be the first vertex in $\alpha_{1}$ which appears in  $\alpha_{2}$. Notice that by the supposition in (2.3) we get $u\neq x$,  by supposition of Subcase 2.2 we get $y\neq z$, by Remark 1(1) and Remark 1(3) we get $y \notin \{u, w\}$.  Since $(u, \alpha_{1},y)$ is contained in $D_1$, $(y, \alpha_{2}, w)$ is contained in $D$, $\alpha_{3}$ is contained in $D_2$, there exists an $H$-obstruction on $w$, with respect to $(u, \alpha_{1},y) \cup (y, \alpha_{2}, w) \cup \alpha_{3}$, and  by Remark 1(5) there exists an $H$-obstruction on $y$, with respect to $(u, \alpha_{1},y) \cup (y, \alpha_{2}, w) \cup \alpha_{3}$, then  $(u, \alpha_{1},y) \cup (y, \alpha_{2}, w) \cup \alpha_{3}$ is a $ux$-path  which is a $(\xi_{1}, \xi, \xi_{2})$-$H$-subdivision of $\overrightarrow{P_{3}}$.

	\textbf{(2.4.)} $V (\alpha_{1})\cap V(\alpha_{3}) \neq \emptyset$. 
	
	Let $y$ be  the last vertex in $\alpha_{1}$ which is in $\alpha_{3}$.  Notice that it follows from Remark 1(2) and Remark 1(3) that $y \notin \{z, w\}$. Let $x'$ be the first vertex in   $(y, \alpha_{1}, z)$ which appears in $\alpha_{2}$ (notice that $x'$ can be $z$). It follows from Remark 1(3) that $x' \neq w $, by Claim 2 and the fact $y\neq w$ we get $x' \neq y $. Since $(y, \alpha_{1}, x')$ is contained in $D_1$, $(x', \alpha_{2}, w)$ is contained in $D$, $(w, \alpha_{3}, y)$  is contained in $D_2$,   by Remark 1(5) there exist $H$-obstructions on $y$ and $x'$, with respect to $(y, \alpha_{1}, x') \cup (x', \alpha_{2}, w) \cup (w, \alpha_{3}, y)$,  and by hypothesis 3 there exists an $H$-obstruction on $w$, with respect to $(y, \alpha_{1}, x') \cup (x', \alpha_{2}, w) \cup (w, \alpha_{3}, y)$, then $(y, \alpha_{1}, x') \cup (x', \alpha_{2}, w) \cup (w, \alpha_{3}, y)$ is a $(\xi_{1}, \xi, \xi_{2})$-$H$-subdivision of $\overrightarrow{C_{3}}$.

\end{proof}

\section{Main result}

\begin{theorem}
\label{teo:principal,existenciaHnucleo}

Let $H$ be a digraph and $D$ an $H$-colored digraph.  Suppose that
	\begin{enumerate}
		\item For every $i$ in $\{1, 2\}$ and for every cycle $\gamma$ contained in $D_{i}$ there exists $C_m$  in $\xi_{i}$ such that $\gamma$ is contained in $G_{m}$.
		\item For every $i$ in $\{1, 2\}$ and for every $H$-walk $P$ contained in $D_{i}$ there exists $C_{m'}$  in $\xi_{i}$ such that $P$ is contained in $G_{m'}$.
         \item If either there exists a $\xi_{1}\xi_{2}$-arc or there exists a $\xi_{2}\xi_{1}$-arc in $A(\mathscr{C}_{C}(D))$, say $(a,b)$, then $(a,b) \notin A(H)$.
         \item $D$ does not contain a $(\xi_{1}, \xi, \xi_{2})$-$H$-subdivision of $\overrightarrow{C_{3}}$.
		\item If there exists a $ux$-path which is a $(\xi_{1}, \xi, \xi_{2})$-$H$-subdivision of $\overrightarrow{P_{3}}$, for some subset $\{u,x\}$ of $V(D)$, then there exists a $ux$-$H$-path in $D$.
	\end{enumerate}  	

	Then $D$ has an $H$-kernel.
\end{theorem}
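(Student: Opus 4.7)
My plan is the classical maximal-$H$-semikernel argument, now using the $D_{2}$-modified notion developed in Section~\ref{sec:first}. Hypotheses~(1) and~(2) of the theorem are exactly those required by Lemma~\ref{lema:Dvarsigmaaciclica}, so the auxiliary digraph $D_{\mathcal{S}}$ of nonempty $H$-semikernels modulo $D_{2}$ of $D$ exists and is acyclic. Proposition~\ref{prop100} then yields $N\in V(D_{\mathcal{S}})$ with $d^{+}_{D_{\mathcal{S}}}(N)=0$. I would claim that such an $N$ is the desired $H$-kernel; since every element of $V(D_{\mathcal{S}})$ is $H$-independent by definition, only $H$-absorbency remains to be checked.

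Suppose, for contradiction, that $X:=\{z\in V(D)\setminus N : \text{no } zN\text{-}H\text{-path exists in } D\}$ is nonempty. Hypotheses~(1) and~(2) pass to $D[X]$, so Lemma~\ref{lema:existeciaHseminucleomodD2deD} applied to $D[X]$ supplies some $x_{0}\in X$ with $\{x_{0}\}$ an $H$-semikernel modulo $(D[X])_{2}$ of $D[X]$. Set $N^{*}:=N\cup\{x_{0}\}$. The strategy is to show $N^{*}\in\mathcal{S}$; once that is in hand, the arc $(N,N^{*})\in A(D_{\mathcal{S}})$ is automatic, since $N\subsetneq N^{*}$ lets me match each $n\in N$ to $s=n\in N^{*}$, which contradicts $d^{+}_{D_{\mathcal{S}}}(N)=0$.

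Membership $N^{*}\in\mathcal{S}$ reduces to (i) the $H$-independence of $N^{*}$ and (ii) the absorbency-from-$D_{1}$-paths clause of $H$-semikernel modulo $D_{2}$. For (i) the only nontrivial issue is the potential existence of an $nx_{0}$-$H$-path with $n\in N$. By Lemma~\ref{lema:cadahcaminoenD1oD2}, whose hypotheses match (2) and (3) of the theorem, any such path lies either in $D_{1}$ or in $D_{2}$. In the $D_{1}$ case, the semikernel-modulo-$D_{2}$ property of $N$ forces an $x_{0}N$-$H$-path, contradicting $x_{0}\in X$. In the $D_{2}$ case I would invoke Lemma~\ref{lema:existenciaHsubdivision} on a triple built from the offending $nx_{0}$-path and auxiliary pieces extracted from $\{x_{0}\}$ being a semikernel of $D[X]$ and from $N$ being a semikernel modulo $D_{2}$; hypothesis~(4) forbids the $(\xi_{1},\xi,\xi_{2})$-$H$-subdivision of $\overrightarrow{C_{3}}$ output, and hypothesis~(5) upgrades the $(\xi_{1},\xi,\xi_{2})$-$H$-subdivision of $\overrightarrow{P_{3}}$ output into an honest $H$-path whose existence contradicts either the $H$-independence of $N$ or $x_{0}\in X$. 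For (ii), any $N^{*}z$-$H$-path in $D_{1}$ either emanates from $N$ (so the semikernel property of $N$ supplies a $zN$-$H$-path) or from $x_{0}$; in the latter case, if $z\notin X$ then $z$ is absorbed by $N\subseteq N^{*}$, and if $z\in X$ then a parallel subdivision argument via Lemma~\ref{lema:existenciaHsubdivision} forces the $x_{0}z$-$H$-path inside $D[X]_{1}$ (intermediate vertices outside $X$, being absorbed by $N$ but not in $N$, would create obstructions handled exactly by hypotheses (3)--(5)), after which the semikernel property of $\{x_{0}\}$ in $D[X]$ delivers a $zx_{0}$-$H$-path.

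The main obstacle is the joint bookkeeping in the two applications of Lemma~\ref{lema:existenciaHsubdivision}: selecting correct triples $(\alpha_{1},\alpha_{2},\alpha_{3})$ that meet the $H$-obstruction requirements at the middle endpoints, verifying the three ``no $H$-path'' preconditions of the lemma in each subcase, and tracking how each of the two possible outputs collapses via hypotheses (3)--(5) into a contradiction with $x_{0}\in X$, with the $H$-independence of $N$, or with the semikernel property of $\{x_{0}\}$ in $D[X]$. Once those verifications go through, $N^{*}\in\mathcal{S}$ is established and the contradiction with $d^{+}_{D_{\mathcal{S}}}(N)=0$ completes the proof.
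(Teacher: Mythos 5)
Your overall frame (acyclic $D_{\mathcal{S}}$, a sink $N$, and the set $X$ of non-absorbed vertices) matches the paper, but the step where you enlarge $N$ to $N^{*}=N\cup\{x_{0}\}$ is where the argument breaks. For $N^{*}$ to lie in $\mathcal{S}$ you need $H$-independence, and the problematic case is an $nx_{0}$-$H$-path contained in $D_{2}$ for some $n\in N$. Nothing in hypotheses (1)--(5) excludes such a path: the semikernel-modulo-$D_{2}$ property of $N$ only reacts to $N z$-$H$-paths contained in $D_{1}$, and $x_{0}\in X$ only forbids paths \emph{from} $x_{0}$ to $N$. Your plan to refute it via Lemma~\ref{lema:existenciaHsubdivision} cannot work, because that lemma needs a full triple ($D_{1}$-path, $D$-path, $D_{2}$-path with $H$-obstructions at the junctions, plus three non-existence conditions), and a lone $N x_{0}$-path in $D_{2}$ supplies no such configuration; indeed this configuration is perfectly consistent with all hypotheses. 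The paper's proof handles exactly this by \emph{pruning}: it sets $T=\{z\in N:\text{there is no } zx_{0}\text{-}H\text{-path in } D_{2}\}$ and shows $T\cup\{x_{0}\}\in\mathcal{S}$; the discarded vertices $s\in N\setminus T$, which have a $sx_{0}$-$H$-path in $D_{2}$ and (by definition of $X$) no $x_{0}s$-$H$-path in $D$, are precisely what certify the arc $(N,\,T\cup\{x_{0}\})$ in $D_{\mathcal{S}}$ and hence the contradiction with $d^{+}_{D_{\mathcal{S}}}(N)=0$. (With your $N\subsetneq N^{*}$ the arc would indeed be automatic, but only if $N^{*}$ were in $\mathcal{S}$, which is exactly what fails.)

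A second, smaller gap: you obtain $x_{0}$ by applying Lemma~\ref{lema:existeciaHseminucleomodD2deD} to $D[X]$, but the standing hypotheses do not restrict to induced subdigraphs --- transitivity by $H$-paths of the $G_{i}$ is relative to $D$, and an $H$-path guaranteed by transitivity may leave $X$; also some chromatic classes may be empty in $D[X]$. Moreover an $x_{0}z$-$H$-path contained in $D_{1}$ with $z\in X$ need not stay inside $D[X]$, so the semikernel property of $\{x_{0}\}$ in $D[X]$ does not control it; your parenthetical remark that intermediate vertices outside $X$ are ``handled by hypotheses (3)--(5)'' is not substantiated. The paper instead chooses $x_{0}\in X$ directly by the sequence argument (its Claim 1): if no vertex of $X$ had the property that every $x_{0}z$-$H$-path in $D_{1}$ (paths in $D$, not in $D[X]$) is answered by a $zx_{0}$-$H$-path in $D_{1}$, one builds an infinite sequence contradicting Lemma~\ref{lema:noexistenciasucesion} (or Lemma~\ref{lema:existenciaHseminucleogr} when $|\xi_{1}|=1$). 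You would need both corrections --- the choice of $x_{0}$ relative to $D_{1}$-paths in $D$, and the pruning of $N$ to $T$ --- for the proposal to go through; with them it becomes essentially the paper's proof.
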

\begin{proof}
	Since $D_{\mathcal{S}}$ is an acyclic digraph (by Lemma~\ref{lema:Dvarsigmaaciclica}), then $D_{\mathcal{S}}$ contains a vertex $S$ such that $d^{+}_{D_{\mathcal{S}}}(S) = 0$ (by Proposition \ref{prop100}). We will prove that $S$ is an $H$-kernel in $D$.
	
	Since $S \in V(D_{\mathcal{S}})$, it follows that $S$ is an $H$-independent set in $D$. Therefore, it remains to prove that $S$ is an $H$-absorbent set in $D$. 
	
	Proceeding by contradiction, suppose  that $S$ is not an $H$-absorbent set in $D$. Let $X$ be the set defined as $\{z \in V(D)\setminus S : $ there exists no a $zS$-$H$-path in $D\}$. Notice that  by our supposition $X \neq \emptyset$.
	
	\textbf{Claim 1.} There exists $x_{0}$ in $X$ such that for every $z$ in $X\setminus \{x_{0}\}$ if there exists an $x_{0}z$-$H$-path contained in $D_{1}$, then there exists a $zx_{0}$-$H$-path contained in $D_{1}$.

	Proceeding by contradiction, suppose that for every $x$ in $X$ there exists $y$ in $X\setminus \{x\}$ such that there exists an $xy$-$H$-path contained in $D_{1}$ and there exists no a $yx$-$H$-path in $D_{1}$. 
	
	Let $x_{0}$ be a vertex in $X$, then there exists $x_{1}$ in $X\setminus\{x_{0}\}$ such that there exists an $x_{0}x_{1}$-$H$-path contained in $D_{1}$ and there exists no an $x_{1}x_{0}$-$H$-path in $D_{1}$. For $x_1$ there exists $x_{2}$ in $X\setminus \{x_{1}\}$ such that there exists an $x_{1}x_{2}$-$H$-path contained in $D_{1}$ and there exists no an $x_{2}x_{1}$-$H$-path contained in $D_{1}$. With this procedure we obtain a sequence of vertices $(x_{0}, x_{1}, x_{2}, \ldots)$ such that for every $i$ in $\{0,1, \ldots\}$ there exists an $x_{i}x_{i+1}$-$H$-path contained in $D_{1}$ and there exists no an $x_{i+1}x_{i}$-$H$-path contained in $D_{1}$.

On the other hand, notice that $D_{1}$ is an $H[\xi _{1}]$-colored digraph.  If $|\xi _{1}|\geq 2$ we get that $\xi _{1}$ is a partition of V($H[\xi _{1}]$). Therefore, it follows from hypotheses 1 and 2  that $D_{1}$ is an $H[\xi _{1}]$-colored digraph which satisfies the hypotheses of Lemma~\ref{lema:noexistenciasucesion}, a contradiction with the existence of the sequence $(x_{0}, x_{1}, x_{2}, \ldots)$. If $|\xi _{1}|=1$, say  $\xi _{1}=\{C_{r}\}$, then A($D_{1}$) = A($G_{r}$), a contradiction with Lemma~\ref{lema:existenciaHseminucleogr} and the existence of the sequence $(x_{0}, x_{1}, x_{2}, \ldots)$. 

Therefore, there exists $x_{0}$ in $X$ such that for every $z$ in $X\setminus \{x_{0}\}$ if there exists an $x_{0}z$-$H$-path contained in $D_{1}$, then there exists a $zx_{0}$-$H$-path contained in $D_{1}$.
	
	Let $T$ be defined as $\{z \in S : $ there exists no a $zx_{0}$-$H$-path in $D_{2}\}$. 
	
	We get from the definition of $T$ that for every $z$ in $S\setminus T$ there exists a $zx_{0}$-$H$-path in $D_{2}$.

\textbf{Claim 2.} $T\cup \{x_{0}\} $ is an  $H$-independent set in $D$.

Since	$T \subseteq S$ and $S$ is an $H$-independent set in $D$, then $T$ is an $H$-independent set in $D$. By definition of $X$ we have that there exists no $x_{0}T$-$H$-path in $D$. By Lemma \ref{lema:cadahcaminoenD1oD2} we have that every $H$-walk of $D$ is contained in either $D_1$ or $D_2$, then it remains to prove that there exists no $Tx_{0}$-$H$-path contained in $D_1$ and there exists no $Tx_{0}$-$H$-path contained in $D_2$. It follows from the fact $S$ is an $H$-semikernel modulo $D_2$, $T\subseteq S$ and the definition of $X$ that there exists no $Tx_{0}$-$H$-path in $D_1$. By definition of $T$ there exists no $Tx_{0}$-$H$-path in $D_2$. Therefore, $T \cup \{x_{0}\}$ is an $H$-independent set in $D$.
	
	\textbf{Claim 3.} For every $z$ in  $V(D)\setminus (T\cup \{x_{0}\})$ if there exists a $(T \cup \{x_{0}\})z$-$H$-path contained in $D_{1}$, then there exists a $z(T \cup \{x_{0}\})$-$H$-path in $D$.

	Let $z$ be in $V(D)\setminus (T\cup \{x_{0}\})$ such that there exists a $(T\cup \{x_{0}\})z$-$H$-path contained in $D_{1}$.
	
	Proceeding by contradiction, suppose that there exists no an $H$-path from $z$ to $T\cup \{x_{0}\}$ in $D$.
	
Consider two cases.
	
	\textbf{Case 1.} There exists a $Tz$-$H$-path contained in $D_{1}$.
	
	Let $u$ be a vertex in $T$ and $\alpha_{1}$  an $H$-path such that $\alpha_{1}$ is a  $uz$-$H$-path contained in $D_{1}$. Since $T \subseteq S$ and $S$ is an $H$-semikernel modulo $D_{2}$ in $D$, then it follows that there exists $w$ in $S$ such that there is a $zw$-$H$-path contained in $D$, say $\alpha_{2}$, which implies that $z\notin S\cup X$. Since there is no an $H$-path from $z$ to $T\cup \{x_{0}\}$ in $D$, we get that $w \in S \setminus T $. It follows from the definition of $T$ that   there exists a $wx_{0}$-$H$-path in $D_{2}$, say $\alpha_{3}$.

There exists  an $H$-obstruction on $z$, with respect to $\alpha_{1} \cup \alpha_{2}$, otherwise  $\alpha_{1} \cup \alpha_{2}$ is a $uw$-$H$-walk in $D$ and by Lemma~\ref{lema:cadahcaminoenD1oD2} there exists $C_m$ in $\xi_1$ such that $\alpha_{1} \cup \alpha_{2}$ is contained in $G_{m}$. Since $G_{m}$ is transitive by $H$-paths we get that there exists a $uw$-$H$-path in $G_{m}$, a contradiction  because $S$ is an $H$-independent set in $D$ and $\{u,w\} \subseteq S$.

There exists an $H$-obstruction on $w$, with respect to $\alpha_{2} \cup \alpha_{3}$, otherwise $\alpha_{2} \cup \alpha_{3}$ is a $zx_{0}$-$H$-walk in $D$ and by Lemma~\ref{lema:cadahcaminoenD1oD2} we get that there exists $C_m$ in $\xi_2$ such that $\alpha_{2} \cup \alpha_{3}$ is contained in $G_{m}$. Since $G_{m}$ is transitive by $H$-paths we get that there exists a  $zx_{0}$-$H$-path, a contradiction because we are supposing that there exists no a $z(T\cup \{x_{0}\})$-$H$-path. Therefore, there exist $H$-obstructions on $z$ and $w$, with respect to  $\alpha_{1}\cup \alpha_{2}\cup \alpha_{3}$. 
	
Notice that there exists no a $uw$-$H$-path in $D$ because $\{u,w\} \subseteq S$ and $S$ is an $H$-independent set in $D$. There exists no a $zx_{0}$-$H$-path and there exists no a $zu$-$H$-path in $D$ because there exists no $z(T\cup \{x_{0}\})$-$H$-paths in $D$.

Therefore, since it holds the hypotheses of Lemma~\ref{lema:existenciaHsubdivision}, we get from hypothesis 4 of Theorem \ref{teo:principal,existenciaHnucleo} that there exists a $ux_{0}$-path which is a $(\xi_{1}, \xi, \xi_{2})$-$H$-subdivision of $\overrightarrow{P_{3}}$, which implies by hypothesis 5 that there exists a $ux_0$-$H$-path in $D$, a contradiction because $T \cup \{x_{0}\}$ is an $H$-independent set in $D$ and $\{u, x_0\} \subseteq (T \cup \{x_{0}\})$.

	\textbf{Case 2.} There exists an $x_{0}z$-$H$-path contained in $D_{1}$. 
	
	Let $\alpha_{1}$ be an $x_{0}z$-$H$-path contained in $D_{1}$. Since there exists no a $z(T\cup \{x_{0}\})$-$H$-path in $D$, then it follows from the choice of $x_0$ that $z \notin X$; in addition, by definition of $X$ we have that $z \notin S$. It follows from the definition of $X$ that there exist $w$ in $S$ and an $H$-path, say $\alpha_{2}$,  such that $\alpha_{2}$ is a $zS$-$H$-path contained in $D$. Notice that $w \notin T$ because we are supposing that there exists no a $z(T\cup \{x_{0}\})$-$H$-path in $D$. It follows from the definition of $T$ that there exists a $wx_{0}$-$H$-path contained in $D_{2}$, say $\alpha_{3}$.
	
There exists an $H$-obstruction on $z$, with respect to  $\alpha_{1} \cup \alpha_{2}$,  otherwise $\alpha_{1} \cup \alpha_{2}$ is an $x_{0}w$-$H$-walk in $D$ and by Lemma~\ref{lema:cadahcaminoenD1oD2} there exists $C_m$ in $\xi_1$ such that $\alpha_{1} \cup \alpha_{2}$ is contained in $G_{m}$. Since $G_{m}$ is transitive by $H$-paths we get that there exists an $x_{0}w$-$H$-path in $D$, a contradiction with the definition of $X$ because  $w\in S$.

There exists an $H$-obstruction on  $w$, with respect to $\alpha_{2} \cup \alpha_{3}$, otherwise $\alpha_{2} \cup \alpha_{3}$ is a $zx_{0}$-$H$-walk in $D$ and by Lemma~\ref{lema:cadahcaminoenD1oD2} there exists $C_m$ in $\xi_2$ such that $\alpha_{2} \cup \alpha_{3}$ is contained in $G_{m}$.  Since $G_{m}$ is transitive by $H$-paths we have that there exists a $zx_{0}$-$H$-path in $D$, a contradiction  because we are supposing that there exists no a $z(T\cup \{x_{0}\})$-$H$-path in $D$.

Denote  $x_{0}$ by $u$. Notice that by definition of $X$ we get that there exists no a $uw$-$H$-path in $D$. Since we are supposing that there exists no a $z(T\cup \{x_{0}\})$-$H$-path in $D$ it follows that there exists no a $zu$-$H$-path in $D$; that is, there exists no a $zx_{0}$-$H$-path.

	Since it holds the hypotheses of Lemma~\ref{lema:existenciaHsubdivision}, we get from hypothesis 4  of Theorem \ref{teo:principal,existenciaHnucleo} that there exists a $ux_{0}$-path which is a $(\xi_{1}, \xi, \xi_{2})$-$H$-subdivision of $\overrightarrow{P_{3}}$, which is not possible because $x_0 = u$.

	Since we obtain a contradiction with cases 1 and 2, we conclude that there exists  an $H$-path from $z$ to $T\cup \{x_{0}\}$ in $D$.

	It follows from Claims 1 and 2 that  $T\cup \{x_{0}\}$ is an $H$-semikernel modulo $D_{2}$ in $D$, which implies that $T\cup \{x_{0}\}$ $\in$ $\mathcal{S}=V(D_{\mathcal{S}})$.
	
	Since $T\subseteq S$  and for every $s$ in $S \setminus T$ there exists a $sx_{0}$-$H$-path contained in $D_{2}$ and there exists no a $x_{0}s$-$H$-path contained in $D$. Then, $(S,T\cup \{x_{0}\})\in A(D_{\mathcal{S}})$, a contradiction with $d ^{+}_{D_{\mathcal{S}}}(S)= 0$.
	
	Therefore, $S$ is an $H$-kernel in $D$.
\end{proof}

\section{Some consequences of Theorem \ref{teo:principal,existenciaHnucleo}}

\begin{corollary}
Let $D$ be a 3-transitive digraph, $H_1$ and $H_2$ two spanning subdigraphs of $D$ such that $A(H_1) \cap A(H_2) = \emptyset$ and $A(H_1) \cup A(H_2) = A(D)$.  Suppose that  $D$ has no $\overrightarrow{C_{3}}$ and for every $i$ in \{1, 2\} $H_i$ is an acyclic digraph. Then $D$ has a kernel.
\end{corollary}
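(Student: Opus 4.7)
The plan is to derive the corollary as a direct application of Theorem \ref{teo:principal,existenciaHnucleo} via an appropriate choice of auxiliary digraph $H$ and coloring. Let $H$ be the digraph with $V(H)=A(D)$ and $A(H)=\emptyset$, and define $c:A(D)\to V(H)$ by $c(a)=a$, so every arc of $D$ receives its own unique color. Because $A(H)=\emptyset$, the only walks in $H$ are singleton vertices; hence the $H$-paths and $H$-walks of $D$ are exactly the single arcs of $D$, and an $H$-kernel of $D$ is nothing more than a kernel of $D$. Set $\xi=\{\{a\}:a\in A(D)\}$, $\xi_1=\{\{a\}:a\in A(H_1)\}$ and $\xi_2=\{\{a\}:a\in A(H_2)\}$; the hypotheses $A(H_1)\cap A(H_2)=\emptyset$ and $A(H_1)\cup A(H_2)=A(D)$ ensure that $\{\xi_1,\xi_2\}$ is a partition of $\xi$. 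With these choices $D_1=H_1$, $D_2=H_2$, and each $G_a$ is the single-arc subdigraph of $D$ induced by the arc $a$.

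I would then verify each hypothesis of Theorem \ref{teo:principal,existenciaHnucleo}. The standing requirement that each $G_a$ be transitive by $H$-paths is vacuous, since a single-arc digraph admits no two $H$-paths sharing a common endpoint. Hypothesis (1) is vacuous because each $D_i=H_i$ is acyclic. Hypothesis (2) holds since every $H$-walk is a single arc and therefore lies in its own color class. Hypothesis (3) is immediate because $A(H)=\emptyset$. For hypothesis (4), a $(\xi_1,\xi,\xi_2)$-$H$-subdivision of $\overrightarrow{C_3}$ in this setup reduces to a $\overrightarrow{C_3}$ in $D$ whose three arcs are distributed between $H_1$, $D$ and $H_2$, and no such cycle exists since $D$ is assumed to contain no $\overrightarrow{C_3}$ at all. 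Finally, any $(\xi_1,\xi,\xi_2)$-$H$-subdivision of $\overrightarrow{P_3}$ is a directed path $u\to u_1\to u_2\to x$ of length three in $D$, and the 3-transitivity of $D$ supplies the arc $(u,x)$, which serves as the required $ux$-$H$-path; this gives hypothesis (5). The theorem then delivers an $H$-kernel, which is the desired kernel of $D$.

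The main obstacle is the choice of coloring. The coarser alternative of coloring $A(H_1)$ with one color and $A(H_2)$ with another would give $G_i=H_i$, and the standing transitivity requirement would then demand 2-transitivity of $H_i$, which is strictly stronger than what the hypotheses supply. The per-arc coloring circumvents this by trivializing the transitivity condition, while aligning the two forbidden substructures of Theorem \ref{teo:principal,existenciaHnucleo} exactly with the two structural hypotheses on $D$, namely the absence of $\overrightarrow{C_3}$ (matching hypothesis (4)) and 3-transitivity (matching hypothesis (5)). A minor technical point is the standing convention $k\ge 2$; it fails only in the degenerate case $|A(D)|\le 1$, which is handled by inspection.
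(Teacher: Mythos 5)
Your proposal is correct and follows essentially the same route as the paper: color each arc of $D$ with its own color, take $H$ with $A(H)=\emptyset$ so that $H$-paths are single arcs and an $H$-kernel is a kernel, split $\xi$ into $\xi_1,\xi_2$ according to $A(H_1)$ and $A(H_2)$ so that $D_i=H_i$, and verify hypotheses (1)--(5) exactly as you do (acyclicity, length-one $H$-walks, $A(H)=\emptyset$, absence of $\overrightarrow{C_{3}}$, and 3-transitivity). The only difference is the degenerate case: the paper first disposes of $A(H_i)=\emptyset$ for some $i$ (where $\xi_i$ would be empty and $\{\xi_1,\xi_2\}$ not a partition of $\xi$) by noting $D$ is then acyclic and invoking the classical kernel theorem, a case your ``$|A(D)|\le 1$'' remark does not quite cover but which is fixed by the same one-line observation.
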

\begin{proof}

It is easy to see that if $\mid A(H_i) \mid = \emptyset$ for some $i$ in \{1, 2\}, then $D$ has a kernel (because in this case by hypothesis $D$ is an acyclic digraph and by Theorem \ref{0000}). Therefore, suppose that $\mid A(H_i) \mid \geq 1$ for every $i$ in \{1, 2\} and $\mid A(D) \mid = q$. 

Let $H$ be a digraph with V($H$) =  \{1, $\ldots$ , $q$\},   and A($H$) = $\emptyset$. Let $D'$ be  the $H$-colored digraph obtained from $D$ by assigning a different color to each arc of $D$. 

Let $\xi=\{\{1\}, \ldots , \{q\}\}$ a partition of V($H$) and \{$\xi_1$ = \{\{$i$\} : $a$ has color $i$ for some $a$ in A($D_1$)\}, $\xi_2$ =  \{\{$j$\} : $b$ has color $j$ for some $b$ in A($D_2$)\}\} a partition of $\xi$.

Notice that it follows from the definition of $\xi$ and the definition of \{$\xi_1$, $\xi_2$\} that for every $i$ in \{1, 2\} $D_i = H_i$. Consider the following claims.

{\bf Claim 1.} For every $i$ in $\{1, 2\}$ and for every cycle $\gamma$ contained in $D_{i}$ there exists $C_m$  in $\xi_{i}$ such that $\gamma$ is contained in $G_{m}$.

This follows because $H_{i}$ is acyclic.

{\bf Claim 2.}  For every $i$ in $\{1, 2\}$ and for every $H$-walk $P$ contained in $D_{i}$ there exists $C_{m'}$  in $\xi_{i}$ such that $P$ is contained in $G_{m'}$.

Since every $H$-path in $D'$ has length at most one, then it  follows from the definition of $\xi$ and the definition of \{$\xi_1$, $\xi_2$\} that Claim 2 holds.

{\bf Claim 3.}  If either there exists a $\xi_{1}\xi_{2}$-arc or there exists a $\xi_{2}\xi_{1}$-arc in $A(\mathscr{C}_{C}(D'))$, say $(a,b)$, then $(a,b) \notin A(H)$.

It follows from the fact $A(H) = \emptyset$.

{\bf Claim 4.}  $D'$ does not contain a $(\xi_{1}, \xi, \xi_{2})$-$H$-subdivision of $\overrightarrow{C_{3}}$.

It follows from the fact $D$ has no $\overrightarrow{C_{3}}$ and because every $H$-path in $D'$ has length at most one.

{\bf Claim 5.}  If there exists a $ux$-path which is a $(\xi_{1}, \xi, \xi_{2})$-$H$-subdivision of $\overrightarrow{P_{3}}$, for some subset $\{u,x\}$ of $V(D)$, then there exists a $ux$-$H$-path in $D$.

Since $D$ is a 3-transitive digraph, then Claim 5 holds.\\

Therefore, we get from Theorem \ref{teo:principal,existenciaHnucleo} that $D'$ has an $H$-kernel which is a kernel.
 
\end{proof}

{\bf Proof of Theorem \ref{ccd}}

Consider the partition $\xi$ = \{\{$v$\} : $v$ $\in$ V($H$)\} of $V(H)$ and the partition \{$\xi_1$ = \{\{$u$\} : $u$ $\in$ $X$\}, $\xi_2$ = \{\{$u$\} : $u$ $\in$ $Y$\}\} of $\xi$ which were given in  Remark \ref{rembip}. Consider the following claims.

\begin{enumerate}
		\item For every $i$ in $\{1, 2\}$ and for every cycle $\gamma$ contained in $D_{i}$ there exists $C_m$  in $\xi_{i}$ such that $\gamma$ is contained in $G_{m}$.
		
It follows from the fact \{$X$, $Y$\} is a partition of V($\mathscr{C}_{C}(D)$) into independent sets.

		\item For every $i$ in $\{1, 2\}$ and for every $H$-walk $P$ contained in $D_{i}$ there exists $C_{m'}$  in $\xi_{i}$ such that $P$ is contained in $G_{m'}$.
		
		It follows from the fact \{$X$, $Y$\} is a partition of V($\mathscr{C}_{C}(D)$) into independent sets.
		
         \item If either there exists a $\xi_{1}\xi_{2}$-arc or there exists a $\xi_{2}\xi_{1}$-arc in $A(\mathscr{C}_{C}(D))$, say $(a,b)$, then $(a,b) \notin A(H)$.
         
It follows from the fact $A(H)$ = \{($u$,$u$) : $u$ $\in$ V($H$)\}.

         \item $D$ does not contain a $(\xi_{1}, \xi, \xi_{2})$-$H$-subdivision of $\overrightarrow{C_{3}}$.

Proceeding by contradiction, suppose that $W=(u_{0}, \ldots , u_{l}=v_{0}, \ldots , v_{m}=w_{0}, \ldots ,w_{n}=u_{0})$  is a $(\xi_{1}, \xi, \xi_{2})$-$H$-subdivision of $\overrightarrow{C_{3}}$. Since there are $H$-obstructions on $u_{0}$, $v_{0}$ and $w_{0}$ with respect to $W$, then the color of the monochromatic path $T_{1}=(u_{0},W, u_{l})$ is different of the color of the monochromatic path $T_{2}=(v_{0},W, v_{m})$ and the color of the monochromatic path $T_2$  is different of the color of the monochromatic path $T_{3}=(w_{0}, W, w_{n})$. Suppose that $T_1$ has color $i$, $T_2$ has color $j$ and $T_3$ has color $k$. Since $i \neq j$,  ($i$,$j$) $\in$ A($\mathscr{C}_{C}(D)$), $i$ $\in$ $X$ and $X$ is an independent set in $\mathscr{C}_{C}(D)$, we get that $j$ $\in$ $Y$, which implies that $\{j, k\} \subseteq Y$, a contradiction because $k \neq j$, ($j$,$k$) $\in$ A($\mathscr{C}_{C}(D)$) and $Y$ is an independent set in $\mathscr{C}_{C}(D)$.

\item If there exists a $ux$-path which is a $(\xi_{1}, \xi, \xi_{2})$-$H$-subdivision of $\overrightarrow{P_{3}}$, for some subset $\{u,x\}$ of $V(D)$, then there exists a $ux$-$H$-path in $D$.

The proof is similar that 4.\\

Therefore, we get from Theorem \ref{teo:principal,existenciaHnucleo} that $D$ has an $H$-kernel which is an mp-kernel.$\blacksquare$
\end{enumerate}

\begin{lemma}
\label{prorain}
Let $D$ be an m-colored digraph, \{$u$, $v$, $w$\} a subset of V($D$), with $u\neq v$ $u\neq w$ $w\neq v$,   $P_1$  a $uv$- path in $D$ and $P_2$  a $vw$-path in $D$. Suppose that every chromatic class is transitive. Then 
\begin{enumerate}
\item if   $P_1$  and $P_2$ are properly colored paths, then there exists a $uw$-properly colored path in $D$,
\item  if  $\mathscr{C}_{C}(D)$ has no cycles of length at least two, then every properly colored path is a rainbow path.
\end{enumerate}
\end{lemma}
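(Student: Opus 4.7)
The plan for part (1) is to proceed by strong induction on $l(P_1) + l(P_2)$. For the base case $l(P_1) = l(P_2) = 1$, I would distinguish whether $c(u,v) \neq c(v,w)$, in which case $(u,v,w)$ is already properly colored, or $c(u,v) = c(v,w) = c$, in which case transitivity of the chromatic class of color $c$ supplies the arc $(u,w)$, and the single-arc path $(u,w)$ serves as the desired properly colored $uw$-path.

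For the inductive step, I would first dispose of two easy cases: if $u \in V(P_2)$, the subpath $(u, P_2, w)$ is the desired properly colored $uw$-path; symmetrically if $w \in V(P_1)$, then $(u, P_1, w)$ works. Otherwise, if $V(P_1) \cap V(P_2) \supsetneq \{v\}$, I would let $y$ be the first vertex of $P_1$ (traversing from $u$) that also lies on $P_2$. The hypotheses force $y \notin \{u, v, w\}$, so the properly colored subpaths $(u, P_1, y)$ and $(y, P_2, w)$ satisfy the inductive hypothesis with strictly smaller total length, producing the desired $uw$-properly colored path.

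The remaining case is $V(P_1) \cap V(P_2) = \{v\}$. If the transition at $v$ is proper, then $P_1 \cup P_2$ is itself a properly colored $uw$-path. Otherwise, letting $(x,v)$ be the last arc of $P_1$ and $(v,y')$ the first arc of $P_2$, both of some color $c$, transitivity of the chromatic class of $c$ delivers $(x,y') \in A(D)$ with color $c$, and $x \neq y'$ since $D$ has no loops. The walk $W = (u, P_1, x) \cup (x,y') \cup (y', P_2, w)$ has vertex set contained in $(V(P_1) \setminus \{v\}) \cup (V(P_2) \setminus \{v\})$, a disjoint union by the current case assumption, so $W$ is a $uw$-path; the new transitions at $x$ and $y'$ remain proper because the neighboring arcs of $P_1$ and $P_2$ (when present) have color different from $c$ by the proper coloring hypothesis.

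For part (2), I would argue by contradiction: assume $P = (v_0, v_1, \ldots, v_n)$ is properly colored but not rainbow, set $c_k := c(v_{k-1}, v_k)$, and pick a pair $(i, j)$ with $i < j$, $c_i = c_j$, and $j - i$ minimal. Proper coloring forces $c_i \neq c_{i+1}$, so $j - i \geq 2$. Each consecutive pair of arcs in $P$ induces an arc in $\mathscr{C}_C(D)$, so $(c_i, c_{i+1}, \ldots, c_{j-1}, c_j)$ is a closed walk in $\mathscr{C}_C(D)$, and the minimal choice of $j - i$ ensures $c_i, c_{i+1}, \ldots, c_{j-1}$ are pairwise distinct; hence this walk is itself a cycle of length $j - i \geq 2$ in $\mathscr{C}_C(D)$, contradicting the hypothesis. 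The principal technical point to verify carefully is the shortcut step in part (1)'s last case: simultaneously checking that $W$ is a path (leveraging $V(P_1) \cap V(P_2) = \{v\}$ and the absence of loops in $D$) and that its two new transitions remain properly colored. The remainder is routine case analysis.
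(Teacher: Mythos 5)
Your proof is correct and rests on essentially the same two ideas as the paper's: splice the two paths at the first vertex of $P_1$ lying on $P_2$, and when the two arcs meeting at the junction share a color, use transitivity of that chromatic class to replace them by a single arc of the same color; part (2) is likewise the same closed-walk argument in $\mathscr{C}_{C}(D)$. The only differences are organizational — you wrap the splicing in an induction on $l(P_1)+l(P_2)$ and extract the cycle via a minimal repeated pair, where the paper argues in a single pass and cites Proposition \ref{closedcycle} — so the mathematical content coincides.
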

\begin{proof}
Suppose that $P_1 = (u= u_0, \ldots , u_n=v)$ and $P_2 = (v= v_0, \ldots , v_m=w)$ for some subset \{$n$, $m$\} of $\mathbb{N}$. 

1. Suppose that $P_1$ and $P_2$ are properly colored paths. Let $u_{i_0}$ be the first vertex in $P_1$ that appears in $P_2$;  there exists $u_{i_0}$ because $v$ $\in$ $V(P_1) \cap V(P_2)$. If  either $u_{i_0} = u$ or $u_{i_0} = w$, then clearly ($u_{i_0} = u$, $P_2$, $w$) is a $uw$-properly colored path in $D$ or ($u$, $P_1$, $u_{i_0} = w$) is a $uw$-properly colored path in $D$, respectively; suppose that $u_{i_0} \neq u$ and $u_{i_0} \neq w$. Notice that  ($u$, $P_1$, $u_{i_0}$) and ($u_{i_0} = u$, $P_2$, $w$) are properly colored paths and by choice of $u_{i_0}$ we have that ($u$, $P_1$, $u_{i_0}$) $\cup$ ($u_{i_0}=v_j$, $P_2$, $w$) is a path. If  ($u_{i_0-1}$,$u_{i_0}$) and ($u_{i_0}=v_j$,$v_{j+1}$) have a different color, then ($u$, $P_1$, $u_{i_0}$) $\cup$ ($u_{i_0}=v_j$, $P_2$, $w$) is a $uw$-properly colored path in $D$; therefore suppose that ($u_{i_0-1}$,$u_{i_0}$) and ($u_{i_0}=v_j$,$v_{j+1}$) have the same color, say $\alpha$. Since ($u_{i_0-1}$,$u_{i_0}$) and ($u_{i_0}=v_j$,$v_{j+1}$) are in  the same chromatic class, then it follows from hypothesis  that ($u_{i_0-1}$,$v_{j+1}$) $\in$ A($D$) and this arc has color $\alpha$. Therefore, ($u$, $P_1$, $u_{i_0-1}$) $\cup$ ($u_{i_0-1}$,$v_{j+1}$) $\cup$ ($v_{j+1}$, $P_2$, $w$) is a $uw$-properly colored path in $D$.

2. Let  $P_3 = (u= w_0, \ldots , w_k=w)$ be a properly colored path in $D$  for some $k$ in $\mathbb{N}$. We claim that $P_3$ is a rainbow path, otherwise there exist two arcs ($w_i$,$w_{i+1}$) and ($w_j$,$w_{j+1}$) in $P_3$, with $i+1 < j$, such that these arcs have the same color, say $\alpha$. Therefore, ($c$($w_i$,$w_{i+1}$) = $\alpha$, $c$($w_{i+1}$,$w_{i+2}$), $\ldots$ , $c$($w_j$,$w_{j+1}$) = $\alpha$) is a closed walk in $\mathscr{C}_{C}(D)$ which contains a cycle (by Proposition \ref{closedcycle}), a contradiction.

\end{proof}

\begin{corollary}
\label{altpath}
Let $D$ be an m-colored digraph without isolated vertices. Suppose that every chromatic class is transitive. Then $D$ has a PCP-kernel.
\end{corollary}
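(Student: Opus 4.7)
The plan is to bypass Theorem~\ref{teo:principal,existenciaHnucleo} and build a PCP-kernel directly from Lemma~\ref{prorain}(1). My reason for avoiding the main theorem is that when one tries to encode properly colored paths by taking $H$ to be the loopless complete digraph on the color set and then feeding this into Theorem~\ref{teo:principal,existenciaHnucleo}, hypothesis~3 forces the partition $\{\xi_{1},\xi_{2}\}$ of the colors to have no cross-arcs in $\mathscr{C}_{C}(D)$ in either direction (since every non-loop in $A(H)$ belongs to $A(H)$), a condition which need not hold under the hypotheses of this corollary.

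First I would define a binary relation $\preceq$ on $V(D)$ by declaring $u\preceq v$ iff $u=v$ or there exists a $uv$-properly colored path in $D$. Reflexivity is immediate, and transitivity is the key step: given $u\preceq v$ and $v\preceq w$, if some two among $u,v,w$ coincide the conclusion $u\preceq w$ is trivial, and otherwise $u,v,w$ are pairwise distinct and Lemma~\ref{prorain}(1) (which uses the transitivity of each chromatic class) yields a $uw$-properly colored path. Hence $\preceq$ is a quasi-order on the finite set $V(D)$, and the equivalence $u\sim v \Leftrightarrow (u\preceq v$ and $v\preceq u)$ descends $\preceq$ to a partial order on the finite quotient $V(D)/{\sim}$.

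Next I would extract the kernel from this poset. Let $[u_{1}],\ldots,[u_{k}]$ enumerate the maximal elements of $V(D)/{\sim}$, which are nonempty because the poset is finite. I would pick a representative $s_{i}\in[u_{i}]$ for each $i$ and set $S=\{s_{1},\ldots,s_{k}\}$. For PCP-independence, given $i\neq j$ the classes $[u_{i}]$ and $[u_{j}]$ are distinct maximal elements of the poset, hence incomparable, so neither $s_{i}\preceq s_{j}$ nor $s_{j}\preceq s_{i}$ holds, which together with $s_{i}\neq s_{j}$ rules out a PCP in either direction. For PCP-absorbency, any $u\in V(D)\setminus S$ lies in a class dominated in the poset by some maximal class $[u_{i}]$, so $u\preceq s_{i}$; since $u\notin S$ forces $u\neq s_{i}$ (either $[u]<[u_{i}]$, or $[u]=[u_{i}]$ with $u$ not the chosen representative), this supplies the required $us_{i}$-PCP.

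The only real obstacle is verifying transitivity of $\preceq$ via Lemma~\ref{prorain}(1) with its case split on possible coincidences among $u,v,w$; once that is in place the rest is standard finite-poset reasoning, and the hypothesis that $D$ has no isolated vertices is consistent with the setup but not essential to the argument.
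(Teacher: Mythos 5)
Your proof is correct, but it takes a genuinely different route from the paper. The paper proves this corollary as an application of Theorem \ref{teo:principal,existenciaHnucleo}: it sidesteps exactly the obstruction you identify (hypothesis 3 failing for a naive partition of the colors of $D$) by a doubling trick — it forms a disjoint copy $D^{*}$ of $D$ colored over a disjoint copy $H^{*}$ of the complete loopless digraph $H$ on the colors, sets $D'=D\cup D^{*}$, $H'=H\cup H^{*}$, and takes $\xi=\{V(H),V(H^{*})\}$ with $\xi_{1}=\{V(H)\}$, $\xi_{2}=\{V(H^{*})\}$; since $D$ and $D^{*}$ share no vertices, all five hypotheses hold trivially (there are no $\xi_{1}\xi_{2}$- or $\xi_{2}\xi_{1}$-arcs in $\mathscr{C}_{C}(D')$ and no mixed subdivisions), Lemma \ref{prorain}(1) supplies transitivity by $H'$-paths of each $G_{i}$, and the intersection of the resulting $H'$-kernel with $V(D)$ is the desired PCP-kernel. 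You instead argue directly in $D$: Lemma \ref{prorain}(1), with the case split on coincidences among $u,v,w$, makes PCP-reachability a preorder, and choosing one representative from each maximal class of the finite quotient poset gives PCP-independence (distinct maximal classes are incomparable) and PCP-absorbency (every class lies below a maximal one, and a nonrepresentative vertex of a maximal class reaches its representative). This is the classical ``transitive reachability yields a kernel'' argument transplanted to properly colored paths; it is more elementary and self-contained, at the cost of not illustrating the main theorem, which is the paper's purpose in this section. Your side remark that the no-isolated-vertices hypothesis is inessential for your argument is also accurate, since isolated vertices simply form singleton maximal classes and land in $S$.
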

\begin{proof}
Let $H$ be a complete digraph without loops such  that  V($H$) = \{$i$ : ($u$,$v$) has color $i$ for some ($u$,$v$) in A($D$)\}. Notice that $D$ is an $H$-colored digraph (by choice of $H$).

Let $D^{*}$ be a digraph such that $D$ and $D^{*}$ are isomorphic, with V($D$) $\cap$ V($D^{*}$) = $\emptyset$, and let $H^{*}$ be a digraph such that $H$ and $H^{*}$ are isomorphic, with V($H$) $\cap$ V($H^{*}$) = $\emptyset$. Consider $f : V(D) \rightarrow V(D^{*})$ and $g : V(H) \rightarrow V(H^{*})$ two isomorphisms. Suppose that $D^{*}$ is an $H^{*}$-colored digraph such that ($u$,$v$) has color $i$ in $D$ if and only if ($f(u)$,$f(v)$) has color $g(i)$ in $D^{*}$. It follows that $D^{*}$ holds the same  hypothesis as $D$.

Let $D' = D \cup D^*$ be. Notice that $D'$ is an $H'$-colored digraph (with V($H'$) = V($H$) $\cup$ V($H^*$) and A($H'$) = A($H$) $\cup$ A($H^*$))

Consider the partition $\xi$ = \{$C_1$ = V($H$), $C_2$ = V($H^{*}$)\} of $V(H')$ and the partition \{$\xi_1$ = \{$C_1$\}, $\xi_2$ = $C_2$\}\} of $\xi$. It follows from the choice of $\xi$ and \{$\xi_1$, $\xi_2$\} that $A(D_1)=A(D)$ and $A(D_2) = A(D^{*})$. Since every $H'$-path in $D'$ is a properly colored path in $D'$, we get from Lemma \ref{prorain} (1) that $G_{i}$ = $D'$[\{$a\in$ A($D'$) : the color of the arc $a$ is in $C_{i}$\}] is a subdigraph of $D'$ which is transitive by $H'$-paths in $D'$ for every $i$ in \{1, 2\}.  The following claims can be deduced from the definition of $D'$ and its $H'$-coloring.

\begin{enumerate}
\item For every $i$ in $\{1, 2\}$ and for every cycle $\gamma$ contained in $D_{i}$ there exists $C_m$  in $\xi_{i}$ such that $\gamma$ is contained in $G_{m}$.

\item For every $i$ in $\{1, 2\}$ and for every $H'$-walk $P$ contained in $D_{i}$ there exists $C_{m'}$  in $\xi_{i}$ such that $P$ is contained in $G_{m'}$.

\item If either there exists a $\xi_{1}\xi_{2}$-arc or there exists a $\xi_{2}\xi_{1}$-arc in $A(\mathscr{C}_{C}(D'))$, say $(a,b)$, then $(a,b) \notin A(H')$.

\item $D'$ does not contain a $(\xi_{1}, \xi, \xi_{2})$-$H'$-subdivision of $\overrightarrow{C_{3}}$.

\item If there exists a $ux$-path which is a $(\xi_{1}, \xi, \xi_{2})$-$H'$-subdivision of $\overrightarrow{P_{3}}$, for some subset $\{u,x\}$ of $V(D')$, then there exists a $ux$-$H'$-path in $D'$.
\end{enumerate}

Therefore, we get from Theorem \ref{teo:principal,existenciaHnucleo} that $D'$ has an $H'$-kernel, say $K$. It follows from the construction of $D'$ that V($D$) $\cap$ $K$ is an $H$-kernel of $D$, which is a PCP-kernel of $D$.
          
\end{proof}

\begin{corollary}
Let $D$ be an m-colored digraph without isolated vertices. Suppose that every chromatic class is transitive. If $\mathscr{C}_{C}(D)$ has no cycles of length at least two, then $D$ has a kernel by rainbow paths.
\end{corollary}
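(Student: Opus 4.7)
The plan is to combine the preceding Corollary \ref{altpath} with Lemma \ref{prorain}(2) and observe that, under these hypotheses, the classes of properly colored paths and rainbow paths in $D$ coincide. First I would invoke Corollary \ref{altpath}: since $D$ is an $m$-colored digraph without isolated vertices and every chromatic class is transitive, $D$ has a PCP-kernel $K$, that is, a set which is independent and absorbent with respect to properly colored paths.

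Next I would use Lemma \ref{prorain}(2): since every chromatic class is transitive and $\mathscr{C}_{C}(D)$ has no cycles of length at least two, every properly colored path in $D$ is a rainbow path. Conversely, every rainbow path is trivially properly colored, because if all arcs carry distinct colors then in particular consecutive arcs do. Hence the notions of PCP-path and rainbow path agree on $D$.

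From this coincidence it follows immediately that $K$, which has no PCP-path between any two of its vertices and is reached by a PCP-path from every outside vertex, has the same property with ``PCP-path'' replaced by ``rainbow path''. Therefore $K$ is a kernel by rainbow paths, concluding the argument. There is essentially no genuine obstacle: the proof is just the composition of the two previously established results, and the only care needed is the (trivial) remark that rainbow always implies properly colored, so that the PCP-kernel status transfers to the rainbow setting.
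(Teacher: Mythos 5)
Your proposal is correct and follows exactly the paper's argument: obtain a PCP-kernel via Corollary \ref{altpath}, then use Lemma \ref{prorain}(2) together with the trivial observation that rainbow paths are properly colored to transfer both independence and absorbency to the rainbow setting. No difference in approach.
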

\begin{proof}
Let $N$ be a  PCP-kernel of $D$  (by Corollary \ref{altpath}). It follows from Lemma \ref{prorain} (2) that $N$ is a kernel by rainbow paths (recall that every rainbow path is a properly colored path).
\end{proof}

Recall that when $A(H)$ = \{($u$,$u$) : $u$ $\in$V($H$)\}, an $H$-path is a monochromatic path,  an $H$-kernel is an  mp-kernel and $D$ is said to be $m$-colored (where $|V(H)|=m$). Also in this case a $(\xi_{1}, \xi, \xi_{2})$-$H$-subdivision of $\overrightarrow{C_{3}}$ is called $(\xi_{1}, \xi, \xi_{2})$-subdivision of $\overrightarrow{C_{3}}$ and a $(\xi_{1}, \xi, \xi_{2})$-$H$-subdivision of $\overrightarrow{P_{3}}$ is called $(\xi_{1}, \xi, \xi_{2})$-subdivision of $\overrightarrow{P_{3}}$.

\begin{corollary}[\cite{galeana}]
Let $D$ be an m-colored digraph, $C$  the set of colors of $A(D)$, $\xi$ =\{$C_{1}$, $C_{2}$, $\ldots$ ,  $C_{k}$\} ($k\geq 2$) a partition of $C$, $\{\xi_{1}, \xi_{2}\}$ a partition of $\xi$.   For every $i$ in \{1, 2\} $D_i$ is the spanning subdigraph of $D$ such that $A(D_i)=\{a\in A(D) : c(a)\in C ~\text{for some} ~C ~ \text{in} ~ \xi_i\}$. Suppose that

\begin{enumerate}
\item for every $i$ in \{1, $\ldots$ , $k$\}, $D[\{a\in A(D) : c(a)\in C_{i}\}]$ is transitive by monochromatic paths,
\item for every $i$ in \{1, 2\} and for every cycle $\gamma$ contained in $D_i$ there exists $C_j$ in $\xi_i$ such that $c(e)\in C_j$ for every $e$ in $A(\gamma)$,
\item $D$ does not contain a $(\xi_{1}, \xi, \xi_{2})$-subdivision of $\overrightarrow{C_{3}}$,
\item if there exists a $ux$-path which is a $(\xi_{1}, \xi, \xi_{2})$-subdivision of $\overrightarrow{P_{3}}$, for some subset $\{u,x\}$ of $V(D)$, then there exists a $ux$-monochromatic path in $D$.
\end{enumerate}

Then $D$ has an mp-kernel.
\end{corollary}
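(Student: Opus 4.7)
The plan is to realize the corollary as a direct instance of Theorem~\ref{teo:principal,existenciaHnucleo}. I define the auxiliary digraph $H$ by $V(H)=C$ and $A(H)=\{(c,c):c\in C\}$; then the coloring of $D$ is automatically an $H$-arc coloring, and because $A(H)$ contains only loops, a walk (respectively, path) in $D$ is an $H$-walk (respectively, $H$-path) if and only if it is monochromatic. Consequently an $H$-kernel of $D$ is precisely an mp-kernel, and a $(\xi_{1},\xi,\xi_{2})$-$H$-subdivision of $\overrightarrow{C_{3}}$ or $\overrightarrow{P_{3}}$ is exactly the corollary's notion of $(\xi_{1},\xi,\xi_{2})$-subdivision. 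Under this translation, $G_{i}=D[\{a\in A(D):c(a)\in C_{i}\}]$ is the same object as in the theorem's preamble.

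Next I would verify the five hypotheses of Theorem~\ref{teo:principal,existenciaHnucleo} in order. The standing hypothesis that each $G_{i}$ is transitive by $H$-paths coincides with hypothesis (1) of the corollary, since $H$-paths are monochromatic paths. Hypothesis~1 of the theorem is literally hypothesis~(2) of the corollary. For hypothesis~2, any $H$-walk $P$ in $D_{i}$ is monochromatic, so its single color $c$ lies in exactly one class $C_{m'}\in\xi$, and since at least one arc of $P$ belongs to $D_{i}$, that class satisfies $C_{m'}\in\xi_{i}$; hence $P$ is contained in $G_{m'}$. Hypothesis~3 is automatic: any $\xi_{1}\xi_{2}$- or $\xi_{2}\xi_{1}$-arc $(a,b)$ of $\mathscr{C}_{C}(D)$ has $a\neq b$ because $\xi_{1}$ and $\xi_{2}$ are disjoint parts of $\xi$, so $(a,b)\notin A(H)$ as $A(H)$ contains only loops. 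Finally, hypotheses~4 and~5 of the theorem are respectively hypotheses~(3) and~(4) of the corollary, after the identification above.

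Having verified the five hypotheses, I apply Theorem~\ref{teo:principal,existenciaHnucleo} to conclude that $D$ has an $H$-kernel, which by the identification is an mp-kernel of $D$. I do not anticipate a genuine obstacle; the only point needing care is hypothesis~2 of the theorem, where one must observe that a monochromatic walk sitting inside $D_{i}$ forces its (unique) color class to belong to $\xi_{i}$, so that the appropriate $C_{m'}$ lies in the correct side of the partition $\{\xi_{1},\xi_{2}\}$.
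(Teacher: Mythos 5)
Your proposal is correct and follows essentially the same route as the paper, which obtains this corollary by specializing Theorem~\ref{teo:principal,existenciaHnucleo} to the digraph $H$ with $A(H)=\{(u,u):u\in V(H)\}$, under which $H$-paths are monochromatic paths, $H$-kernels are mp-kernels, and the two subdivision notions coincide. Your explicit checks of hypotheses~2 and~3 of the theorem (a monochromatic walk in $D_i$ lies in a single $G_{m'}$ with $C_{m'}\in\xi_i$, and any $\xi_1\xi_2$- or $\xi_2\xi_1$-arc joins distinct colors, hence is not a loop of $H$) are exactly the details the paper leaves implicit.
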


The following examples show that each hypothesis in Theorem \ref{teo:principal,existenciaHnucleo} is tight. We  show digraphs $H$ and $D$ such that $D$ is an $H$-colored digraph without $H$-kernel.

\begin{remark}
The condition ``for every $i$ in $\{1, 2\}$ and for every cycle $\gamma$ contained in $D_{i}$ there exists $C_m$  in $\xi_{i}$ such that $\gamma$ is contained in $G_{m}$" in Theorem \ref{teo:principal,existenciaHnucleo} cannot be dropped as example in Figure \ref{fig:ejemplo1} shows.
\end{remark}

\begin{figure}[h!]
\begin{center}
\includegraphics[scale=0.3]{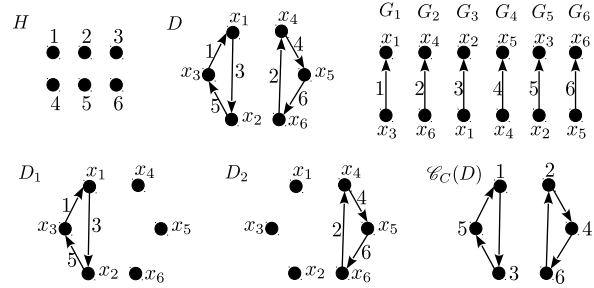}
\caption{$D$ is an $H$-colored digraph without $H$-kernel, $\xi=\{C_{1}=\{1\},C_{2}=\{2\}, C_{3}=\{3\},C_{4}=\{4\}, C_{5}=\{5\}, C_{6}=\{6\}\}$ is a partition of $V(H)$ and $\{\xi_{1}=\{\{1\},\{3\},\{5\}\}, \xi_{2}=\{\{2\},\{4\},\{6\}\}\}$ is a partition of $\xi$.  With these two partitions $D$ holds the hypotheses 2, 3, 4 and 5 of  Theorem \ref{teo:principal,existenciaHnucleo} but $D$ does not hold the hypothesis 1. }
\label{fig:ejemplo1}
\end{center}
\end{figure}

\begin{remark}
The condition ``for every $i$ in $\{1, 2\}$ and for every $H$-walk $P$ contained in $D_{i}$ there exists $C_{m'}$  in $\xi_{i}$ such that $P$ is contained in $G_{m'}$" in Theorem \ref{teo:principal,existenciaHnucleo} cannot be dropped as example in Figure \ref{fig:ejemplo2} shows.
\end{remark}

\begin{figure}[h!]
			\begin{center}
				
					\includegraphics[scale=0.3]{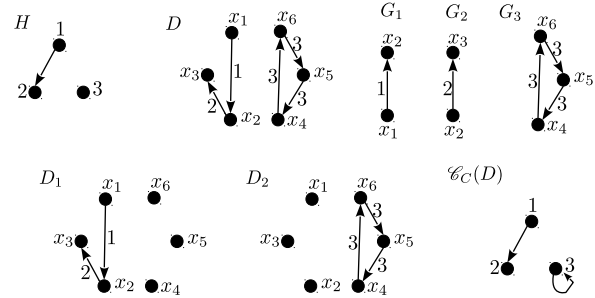}
					
				\caption{$D$ is an $H$-colored digraph without $H$-kernel, $\xi=\{C_{1}=\{1\},C_{2}=\{2\}, C_{3}=\{3\}\}$ is a partition of $V(H)$ and $\{\xi_{1}=\{\{1\},\{2\}\}, \xi_{2}=\{\{3\}\}\}$ is a partition of $\xi$.  With these two partitions $D$ holds the hypotheses 1, 3, 4 and 5 of  Theorem \ref{teo:principal,existenciaHnucleo} but $D$ does not hold  hypothesis 2. }
				\label{fig:ejemplo2}
			\end{center}
		\end{figure}

\newpage

\begin{remark}
The condition ``if either there exists a $\xi_{1}\xi_{2}$-arc or there exists a $\xi_{2}\xi_{1}$-arc in $A(\mathscr{C}_{C}(D))$, say $(a,b)$, then $(a,b) \notin A(H)$" in Theorem \ref{teo:principal,existenciaHnucleo} cannot be dropped as example in Figure \ref{fig:ejemplo3} shows.
\end{remark}

\begin{figure}[h!]
			\begin{center}
					\includegraphics[scale=0.3]{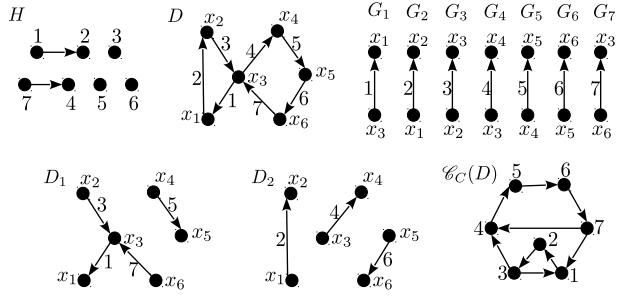}
				\caption{$D$ is an $H$-colored digraph without $H$-kernel, $\xi=\{C_{1}=\{1\},C_{2}=\{2\}, C_{3}=\{3\},C_{4}=\{4\},C_{5}=\{5\}, C_{6}=\{6\}\,C_{7}=\{7\}\}$ is a partition of $V(H)$ and $\{\xi_{1}=\{\{1\},\{3\}, \{5\},\{7\}\}, \xi_{2}=\{\{2\},\{4\},\{6\}\}\}$ is a partition of $\xi$.  With these two partitions $D$ holds the hypotheses 1, 2, 4 and 5 of  Theorem \ref{teo:principal,existenciaHnucleo} but $D$ does not hold hypothesis 3. }
				\label{fig:ejemplo3}
			\end{center}
		\end{figure}

\begin{remark}
The condition ``$D$ does not contain a $(\xi_{1}, \xi, \xi_{2})$-$H$-subdivision of $\overrightarrow{C_{3}}$" in Theorem \ref{teo:principal,existenciaHnucleo} cannot be dropped as example in Figure \ref{fig:ejemplo4} shows.
\end{remark}

\begin{figure}[h!]
			\begin{center}
				\includegraphics[scale=0.3]{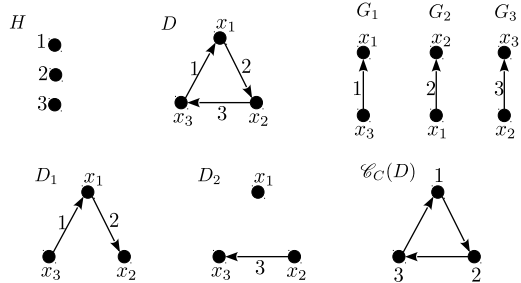}
				\caption{$D$ is an $H$-colored digraph without $H$-kernel, $\xi=\{C_{1}=\{1\},C_{2}=\{2\}, C_{3}=\{3\}\}$ is a partition of $V(H)$ and $\{\xi_{1}=\{\{1\},\{2\}\}, \xi_{2}=\{\{3\}\}\}$ is a partition of $\xi$.  With these two partitions $D$ holds the hypotheses 1, 2, 3 and 5 of  Theorem \ref{teo:principal,existenciaHnucleo} but $D$ does not hold the hypothesis 4.}
				\label{fig:ejemplo4}
			\end{center}
		\end{figure}

\begin{remark}
The condition ``if there exists a $ux$-path which is a $(\xi_{1}, \xi, \xi_{2})$-$H$-subdivision of $\overrightarrow{P_{3}}$, for some subset $\{u,x\}$ of $V(D)$, then there exists a $ux$-$H$-path in $D$" in Theorem \ref{teo:principal,existenciaHnucleo} cannot be dropped as example in Figure \ref{fig:ejemplo5} shows.
\end{remark}

\begin{figure}[h!]
			\begin{center}
				\includegraphics[scale=0.3]{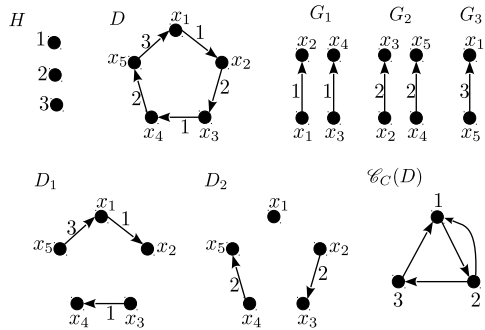}
				\caption{$D$ is an $H$-colored digraph without $H$-kernel, $\xi=\{C_{1}=\{1\},C_{2}=\{2\}, C_{3}=\{3\}\}$ is a partition of $V(H)$ and $\{\xi_{1}=\{\{1\},\{3\}\}, \xi_{2}=\{\{2\}\}\}$ is a partition of $\xi$.  With these two partitions $D$ holds the hypotheses 1, 2, 3 and 4 of  Theorem \ref{teo:principal,existenciaHnucleo} but $D$ does not hold the hypothesis 5.}
				\label{fig:ejemplo5}
			\end{center}
		\end{figure}

\end{document}